\def\NAT@def@citea{\def\@citea{\NAT@separator}}
\theoremstyle{plain}
\newtheorem{theorem}{Theorem}[section]
\newtheorem{lemma}[theorem]{Lemma}
\newtheorem{proposition}[theorem]{Proposition}
\theoremstyle{definition}
\newtheorem{definition}[theorem]{Definition}
\newtheorem{example}[theorem]{Example}
\newtheorem{assumption}{Assumption}
\theoremstyle{remark}
\newtheorem{remark}{Remark}
\begin{document}
\newcommand{\Real}{\mathbb{R}}
\newcommand{\Natural}{\mathbb{N}}
\newcommand{\Id}{\mathrm{Id}}

\newcommand{\parentheses}[1]{\left( #1 \right )}
\newcommand{\brackets}[1]{\left[ #1 \right]}
\newcommand{\braces}[1]{\left\{ #1 \right\}}

\newcommand{\norm}[1]{\left\| #1 \right\|}
\newcommand{\argmin}{\mathrm{arg\,min}}
\newcommand{\innerp}[2]{\left\langle #1, #2 \right\rangle}
\newcommand{\lev}[1]{\mathrm{lev}_{\leq #1}}
\newcommand{\dom}{\mathrm{dom\,}}
\newcommand{\dist}{\mathrm{dist}}
\newcommand{\convhull}{\mathrm{Conv}}
\newcommand{\Prox}{\mathrm{Prox}}
\DeclarePairedDelimiter\abs{\lvert}{\rvert}
\newcommand{\txtred}[1]{\textcolor{red}{#1}}
\newcommand{\revise}[1]{\textcolor{red}{#1}}
\newcommand{\delete}[1]{\textcolor{green}{\sout{#1}}}

\articletype{}

\title{An Inexact Proximal Linearized DC Algorithm with Provably Terminating Inner Loop}

\author{
\name{Yi Zhang\thanks{Yi Zhang. Email: yizhang@sp.ict.e.titech.ac.jp} and Isao Yamada\thanks{Isao Yamada. Email: isao@sp.ict.e.titech.ac.jp}}
\affil{Department of Information and Communications Engineering, Tokyo Institute of Technology, Tokyo, Japan}
}

\maketitle

\begin{abstract}
Standard approaches to difference-of-convex (DC) programs require exact solution to a convex subproblem at each iteration, which generally requires noiseless computation and infinite iterations of an inner iterative algorithm. To tackle these difficulties, inexact DC algorithms have been proposed, mostly by relaxing the convex subproblem to an approximate monotone inclusion problem. However, there is no guarantee that such relaxation can lead to a finitely terminating inner loop. In this paper, we point out the termination issue of existing inexact DC algorithms by presenting concrete counterexamples. Exploiting the notion of $\epsilon$-subdifferential, we propose a novel inexact proximal linearized DC algorithm termed tPLDCA. Despite permission to a great extent of inexactness in computation, tPLDCA enjoys the same convergence guarantees as exact DC algorithms. Most noticeably, the inner loop of tPLDCA is guaranteed to terminate in finite iterations as long as the inner iterative algorithm converges to a solution of the proximal point subproblem, which makes an essential difference from prior arts. In addition, by assuming the first convex component of the DC function to be pointwise maximum of finitely many convex smooth functions, we propose a computational friendly surrogate of $\epsilon$-subdifferential, whereby we develop a feasible implementation of tPLDCA. Numerical results demonstrate the effectiveness of the proposed implementation of tPLDCA.
\end{abstract}

\begin{keywords}
DC programming, finite termination, proximal linearized method, global optimization, nested algorithm
\end{keywords}

\section{Introduction}\label{sec1}
The standard difference-of-convex (DC) programs refer to nonconvex optimization problems of the following form:
\begin{equation}\label{eq:dcp}
(\mathrm{DCP})\quad\underset{x\in\Real^n}{\text{minimize}}\;\;f(x)\coloneqq g(x)-h(x),
\end{equation}
where $g,h:\Real^n\to\Real\cup\{+\infty\}$ are proper, lower semicontinuous and (possibly nonsmooth) convex functions. DC programs form a large class of nonconvex optimization problems encountered in real world \cite{holmberg1999,neumann2005,bagirov2006,khalaf2017}, and have been studied extensively in the mathematical programming community \cite{hiriart1985,tuy1987,pham1997,horst1999,
le2005,le2012,pang2017,le2018}.

A standard approach to solve (DCP) is the DC Algorithm (DCA) \cite{pham1988,pham1997,le2005}, which generates iteratively a sequence of trial points $(x_k)_{k\in\Natural}$ by solving the following convex subproblem,
\begin{equation}\label{eq:sp}
(\mathrm{SP})\quad x_{k+1}\in\arg\min_{x\in\Real^n}\; \tilde{f}_k(x)\coloneqq g(x)-\innerp{u_k}{x-x_k},
\end{equation}
where $u_k\in \partial h(x_k)$. The proximal linearized algorithm for minimization of DC functions (PLDCA) \cite{sun2003,moudafi2006,souza2016,an2017} is a natural extension of DCA, where (SP) is replaced by the following proximal point subproblem,
\begin{equation}\label{eq:pp}
(\mathrm{PP})\quad x_{k+1}\in\arg\min_{x\in\Real^n}\; \tilde{f}_k(x)\coloneqq g(x)-\innerp{u_k}{x-x_k}+\frac{1}{2\lambda_k}\norm{x-x_k}^2.
\end{equation}

It has been proved that under certain conditions, every cluster point $\bar{x}$ of $(x_k)_{k\in\Natural}$ generated by DCA and PLDCA is a critical point of $g-h$, i.e., $\partial g(\bar{x})\cap \partial h(\bar{x})\neq \emptyset$ \cite{pham1997,le2018,sun2003,moudafi2006}. However, such convergence guarantee requires exact solution to convex subproblems (SP) and (PP). Although in some cases, simpler subproblems can be obtained by rearranging the DC decomposition \cite{pham1998,wen2018}, in general (SP) and (PP) are solved by an inner iterative process. In practice, such inner process is supposed to halt after finitely many iterations, whereby truncation error is induced. Additionally, there are situations where $u_k\in\partial h(x_k)$ can only be computed inexactly. In consequence, the convergence of conventional DCA and PLDCA remains questionable in application.

The aforementioned reasons have motivated research on inexact schemes of DCA and PLDCA. In earlier research, Sun et al. \cite{sun2003} proposed an inexact PLDCA exploiting the concept of $\epsilon$-subdifferential, but their method requires computing the "approximate" proximity operator $(\Id+\lambda \partial_{\epsilon}g)^{-1}$ which is difficult in general. Recent studies \cite{souza2016,oliveira2019,lu2019} are mostly based on the natural relaxation of (SP) or (PP) to the following approximate monotone inclusion problem:
\begin{equation}\label{eq:ap}
\mathrm{(AP)}\quad\dist\parentheses{0,\partial \tilde{f}_k(x_{k+1})}\leq \epsilon_k,
\end{equation}
or its variants (see (\ref{eq:constraint:xik:souza}) in Algorithm \ref{alg:souza}), where $\epsilon_k>0$ is a threshold parameter. Souza et al. \cite{souza2016} proposed a verifiable stopping criterion for the inner iterative algorithm based on (AP), and established convergence of the inexact PLDCA under the criterion. However, there is no guarantee that their stopping criterion can be satisfied in finitely many inner iterations. Oliveira and Tcheou \cite{oliveira2019} proposed a bundle like algorithm as the inner iterative algorithm. The bundle like algorithm is guaranteed to solve (AP) in finite iterations, and convergence of the resulting inexact DC algorithm has been established. Nevertheless, the memory complexity of their bundle like algorithm increases linearly with the number of inner iterations, and their proof of finite inner loop termination does not generalize to other iterative algorithms. Lu et al. \cite{lu2019} proposed an inexact version of the recently proposed Enhanced DCA \cite{pang2017} based on (AP). But as \cite{souza2016}, they did not prove the finite termination of the inner loop in \cite{lu2019}.

In this work, we aim to propose an inexact PLDCA with general guarantees on the finite termination of the inner loop. Our contribution is as follows:
\begin{enumerate}
\item[(a)] We point out the termination issue of existing inexact DC algorithms \cite{souza2016,oliveira2019,lu2019}. More particularly, on one hand, we give an illustrative example to show that (AP) may serve as a disappointing relaxation of (SP) for every $\epsilon_k>0$. On the other hand, we take the inexact PLDCA in \cite{souza2016} as a representative, and present a more concrete example to prove that relaxing (PP) into a variant of (AP) (i.e., (\ref{eq:constraint:xik:souza}) in Algorithm \ref{alg:souza}) may lead to an endless inner loop, even if the inner iterative algorithm converges to a solution of (PP).

\item[(b)] Exploiting the notion of $\epsilon$-subdifferential \cite[Ch. XI]{hiriart1993}, we propose a novel inexact PLDCA termed tPLDCA, where "t" means terminating. Despite its permission to a great extent of inexactness in computation, tPLDCA enjoys the same convergence guarantee as exact DC algorithms \cite{pham1997,sun2003}. Most noticeably, for any inner iterative algorithm convergent to a solution of (PP), the inner loop of tPLDCA is guaranteed to terminate within finite inner iterations, which makes an essential difference from prior arts \cite{souza2016,oliveira2019,lu2019}.

\item[(c)] We analyse possible computational difficulties regarding calculation of $\epsilon$-subdifferential, and propose a feasible solution under an additional assumption in case such difficulties occur. More precisely, by assuming that $g$ is the pointwise maximum of finitely many convex smooth functions, we introduce a computational friendly surrogate of $\epsilon$-subdifferential, named $\epsilon$-strict subdifferential. Employing it in place of $\epsilon$-subdifferential, we obtain a computational friendly implementation of tPLDCA without losing all of its favourable properties.
\end{enumerate}

The remainder of this paper is organized as follows. In Sec. \ref{sec:preliminaries}, we introduce the notation and basic assumptions adopted in this work. In Sec. \ref{sec:priorarts}, we point out the termination issue of existing inexact DC algorithms by presenting counterexamples. In Sec. \ref{sec:conceptual}, we present the proposed tPLDCA, and prove its finite inner loop termination as well as convergence properties. In Sec. \ref{sec:subdifferential}, we introduce the concept of $\epsilon$-strict subdifferential and show its advantages over other subdifferential concepts. In Sec. \ref{sec:implementation}, we propose a computational friendly implementation of tPLDCA by employing $\epsilon$-strict subdifferential in place of $\epsilon$-subdifferential. Sec. \ref{sec:numerical_results} reports some numerical results demonstrating the effectiveness of the proposed implementation of tPLDCA. Sec. \ref{sec:conclusion} closes this paper with some concluding remarks.

\section{Preliminaries}\label{sec:preliminaries}
\subsection{Notation}
We use $\Natural,\Real,\Real_{++}$ to denote the sets of natural numbers, real numbers and positive numbers. $\innerp{\cdot}{\cdot}$ and $\norm{\cdot}$ denote respectively the inner product and Euclidean norm on $\Real^n$. For $x\in\Real^n$ and $r>0$, $B(x;r)$ denotes the open ball $\braces{y\in\Real^n\mid \norm{y-x}<r}$ and $\bar{B}(x;r)$ denotes its closure. The distance between vector $x\in\Real^n$ and a nonempty closed set $S\subset\Real^n$ is denoted by $\dist(x,S)\coloneqq \min_{y\in S}\norm{x-y}$. For $S\subset\Real^n$, $\convhull\, S$ stands for the convex hull of $S$ \cite[Def. 3.3]{bauschke2017}.

$\Gamma_0(\Real^n)$ denotes the set of all proper, lower semicontinuous and convex functions from $\Real^n$ to $\Real\cup\braces{+\infty}$. The subgradient, subdifferential, $\epsilon$-subgradient and $\epsilon$-subdifferential of a convex function $\phi$ at $x\in\Real^n$ are defined as follows.

\begin{definition}[Subgradient and subdifferential {\cite[pp. 214--215]{rockafellar2015}}]
Let $\phi\in\Gamma_0(\Real^n)$. Given $x\in\dom{\phi}$, the vector $s\in\Real^n$ is called a subgradient of $\phi$ at $x$ if
\begin{equation*}
\parentheses{\forall y\in\Real^n}\;\;\phi(y)\geq \phi(x)+\innerp{s}{y-x}.
\end{equation*}
The set of all subgradients of $\phi$ at $x$ is called the subdifferential of $\phi$ at $x$, and is denoted by $\partial \phi(x)$.
\end{definition}

\begin{definition}[$\epsilon$-subgradient and $\epsilon$-subdifferential {\cite[Ch. XI, Def. 1.1.1]{hiriart1993}}]\label{def:esubdiff}
Let $\phi\in\Gamma_0(\Real^n)$. Given $x\in\dom{\phi}$ and $\epsilon>0$, the vector $s\in\Real^n$ is called an $\epsilon$-subgradient of $\phi$ at $x$ if
\begin{equation*}
\parentheses{\forall y\in\Real^n}\;\;\phi(y)\geq \phi(x)+\innerp{s}{y-x}-\epsilon.
\end{equation*}
The set of all $\epsilon$-subgradients of $\phi$ at $x$ is called the $\epsilon$-subdifferential of $\phi$ at $x$, and is denoted by $\partial_{\epsilon}\phi(x)$.
\end{definition}

The following propositions list some useful properties of $\epsilon$-subdifferential.

\begin{proposition}[See {\cite[Ch. XI, Thm. 1.1.2 and 1.1.4]{hiriart1993}}]\label{pp:convexity:esubdiff}
Let $\phi\in\Gamma_0(\Real^n)$, $\epsilon> 0$. Then for every $x\in\dom{\phi}$, $\partial_{\epsilon} \phi(x)$ is a nonempty closed convex set.
\end{proposition}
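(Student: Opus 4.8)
The plan is to treat the three properties separately, handling convexity and closedness together first since they follow directly from Definition \ref{def:esubdiff}, and then devoting the main effort to nonemptiness.

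For convexity and closedness, I would rewrite the defining condition as an intersection of half-spaces in the variable $s$. For each fixed $y\in\dom\phi$, the inequality
\[
\innerp{s}{y-x}\leq \phi(y)-\phi(x)+\epsilon
\]
describes a closed half-space $H_y\subset\Real^n$ in the unknown $s$, while for $y\notin\dom\phi$ the inequality is vacuous. Hence $\partial_\epsilon\phi(x)=\bigcap_{y\in\dom\phi}H_y$. An arbitrary intersection of closed convex sets is closed and convex, which settles both properties with no further work.

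For nonemptiness, the cleanest route I would take uses the Fenchel conjugate $\phi^*(s)\coloneqq\sup_{y}\parentheses{\innerp{s}{y}-\phi(y)}$ together with the equivalence
\[
s\in\partial_\epsilon\phi(x)\iff \phi(x)+\phi^*(s)\leq\innerp{s}{x}+\epsilon,
\]
which is obtained by rearranging the defining inequality to $\innerp{s}{y}-\phi(y)\leq\innerp{s}{x}-\phi(x)+\epsilon$ and taking the supremum over $y$. Since $\phi\in\Gamma_0(\Real^n)$, the Fenchel--Moreau biconjugate theorem gives $\phi=\phi^{**}$, so for $x\in\dom\phi$ the value
\[
\phi(x)=\sup_{s}\parentheses{\innerp{s}{x}-\phi^*(s)}
\]
is finite. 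By the definition of the supremum, for any $\epsilon>0$ there exists $s$ with $\innerp{s}{x}-\phi^*(s)>\phi(x)-\epsilon$, equivalently $\phi(x)+\phi^*(s)<\innerp{s}{x}+\epsilon$; any such $s$ lies in $\partial_\epsilon\phi(x)$, which is therefore nonempty.

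I expect the main obstacle to be the nonemptiness part, where the essential input is that $\phi\in\Gamma_0(\Real^n)$ guarantees $\phi=\phi^{**}$ and that the supremum above is finite with $\dom\phi^*\neq\emptyset$ (since the conjugate of a proper lsc convex function is again proper). An alternative, conjugate-free argument would separate the point $(x,\phi(x)-\epsilon)$, which lies strictly below the closed convex set $\mathrm{epi}\,\phi$, from that set by a hyperplane; there the delicate point is arguing that the separating hyperplane can be chosen \emph{non-vertical}, so that it yields an affine minorant of $\phi$ and hence a genuine $\epsilon$-subgradient, which is again precisely where the properness and lower semicontinuity of $\phi$ are used.
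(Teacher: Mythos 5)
The paper gives no proof of this proposition—it is imported directly from \cite[Ch.~XI, Thm.~1.1.2 and 1.1.4]{hiriart1993}—and your argument is essentially the standard one found there: closedness and convexity from writing $\partial_\epsilon\phi(x)$ as an intersection of closed half-spaces $H_y$ over $y\in\dom\phi$, and nonemptiness from the conjugate characterization $s\in\partial_\epsilon\phi(x)\iff\phi(x)+\phi^*(s)\leq\innerp{s}{x}+\epsilon$ combined with $\phi=\phi^{**}$ for $\phi\in\Gamma_0(\Real^n)$. The proof is correct; the one point worth stating explicitly is that the $s$ produced by the supremum argument automatically satisfies $\phi^*(s)<+\infty$, so the resulting inequality is a genuine inequality between real numbers.
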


\begin{proposition}[Transportation formula {\cite[Ch. XI, Prop. 4.2.2]{hiriart1993}}]\label{pp:transportation:formula}
Let $\phi\in\Gamma_0(\Real^n)$, $x,x'\in\dom{\phi}$, and $s'\in\partial \phi(x')$. Then, for $\epsilon>0$, $s'\in\partial_{\epsilon}\phi(x)$ if and only if
\begin{equation*}
\phi(x')\geq \phi(x)+\innerp{s'}{x'-x}-\epsilon.
\end{equation*}
\end{proposition}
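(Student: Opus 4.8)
The plan is to prove the two implications of the equivalence directly from the definitions of subgradient and $\epsilon$-subgradient (Definition \ref{def:esubdiff}), without invoking any deeper machinery. The hypotheses at my disposal are that $s'$ is an exact subgradient of $\phi$ at $x'$, so the subgradient inequality holds globally with base point $x'$, and the stated one-point inequality relating the values at $x$ and $x'$. The object to control is the $\epsilon$-subgradient inequality of $\phi$ at $x$, which must hold for \emph{every} $y\in\Real^n$.

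For the necessity direction ($\Rightarrow$), I would assume $s'\in\partial_{\epsilon}\phi(x)$ and simply specialize the defining inequality
\[
\phi(y)\geq \phi(x)+\innerp{s'}{y-x}-\epsilon
\]
to the particular point $y=x'$; this yields $\phi(x')\geq \phi(x)+\innerp{s'}{x'-x}-\epsilon$ at once. This half uses neither $s'\in\partial\phi(x')$ nor any structural property of $\phi$, and is immediate.

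For the sufficiency direction ($\Leftarrow$), I would start from the exact subgradient inequality at $x'$, namely $\phi(y)\geq \phi(x')+\innerp{s'}{y-x'}$ for all $y$, and then substitute the hypothesized lower bound $\phi(x')\geq \phi(x)+\innerp{s'}{x'-x}-\epsilon$ into its right-hand side. Chaining the two gives
\[
\phi(y)\geq \phi(x)+\innerp{s'}{x'-x}+\innerp{s'}{y-x'}-\epsilon\qquad(\forall y\in\Real^n).
\]
The key step is to collapse the two inner products by bilinearity, $\innerp{s'}{x'-x}+\innerp{s'}{y-x'}=\innerp{s'}{y-x}$, which telescopes the $x'$ terms and leaves exactly $\phi(y)\geq \phi(x)+\innerp{s'}{y-x}-\epsilon$ for all $y$, i.e.\ $s'\in\partial_{\epsilon}\phi(x)$.

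Since both directions reduce to elementary manipulations of linear inequalities, I do not anticipate a genuine obstacle. The only point demanding care is to verify that the hypothesis $s'\in\partial\phi(x')$, rather than merely the scalar estimate $\phi(x')\geq\cdots$, is actually used in the sufficiency half: it is precisely the global validity of the subgradient inequality at $x'$ that upgrades the single-point bound into a global $\epsilon$-subgradient inequality at $x$. In this sense the exactness of the subgradient at $x'$ is what ``transports'' the certificate from the base point $x'$ to the new base point $x$, at the cost of the slack $\epsilon$.
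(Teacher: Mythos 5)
Your proof is correct. The paper itself imports this proposition from the cited reference without reproving it, and your two-implication argument --- specializing the $\epsilon$-subgradient inequality at $y=x'$ for necessity, and chaining the exact subgradient inequality at $x'$ with the one-point bound (telescoping $\innerp{s'}{x'-x}+\innerp{s'}{y-x'}=\innerp{s'}{y-x}$) for sufficiency --- is exactly the standard proof of this transportation formula, with the role of the hypothesis $s'\in\partial\phi(x')$ correctly identified.
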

It should be noted that the transportation formula above essentially indicates some important continuity property of $\epsilon$-subdifferential which is not shared by subdifferential (see Sec. \ref{sec:continuity:esubdiff} in the present paper for detail). We can have a glance at this distinguishing feature in the following example. 

\begin{example}\label{exp:discontinuity:subdifferential}
Consider the simplest 1D case where $\phi(\cdot)=\lvert \cdot \rvert\in\Gamma_0(\Real)$.
Let $\bar{z}=0$, $\bar{v}=0\in\partial \phi(\bar{z})$, and let $(z_i)_{i\in\Natural}$ be a sequence convergent to $\bar{z}$. If $z_i$ does not coincide with the limit point $\bar{z}$ for every $i\in\Natural$, then it can be verified that
\begin{equation*}
\lim_{i\to +\infty}\dist\parentheses{\bar{v},\partial \phi(z_i)}=1\neq 0= \dist\parentheses{\bar{v},\partial \phi(\bar{z})},
\end{equation*}
which implies that $\dist\parentheses{\bar{v},\partial \phi(z)}$ is discontinuous at $\bar{z}$.

In contrast, let $\epsilon>0$, then from \cite[p. 93]{hiriart1993},
\begin{equation*}
\partial_{\epsilon} \phi(z)=\begin{cases}
[-1,-1+\epsilon/z],& \text{if }z<-\epsilon/2,\\
[-1,+1], & \text{if }-\epsilon/2\leq z\leq \epsilon/2,\\
[1-\epsilon/z,1], & \text{if }z>\epsilon/2.
\end{cases}
\end{equation*}
Hence
\begin{equation*}
\lim_{i\to +\infty}\dist\parentheses{\bar{v},\partial_{\epsilon} \phi(z_i)}=0= \dist\parentheses{\bar{v},\partial_{\epsilon} \phi(\bar{z})},
\end{equation*}
which implies that $\dist\parentheses{\bar{v},\partial_{\epsilon} \phi(z)}$ is continuous at $\bar{z}$.
\end{example}

Keeping the above example in mind, one would verify later that the discontinuity of distance function to subdifferential essentially leads to the failure of inner loop termination of existing inexact DC algorithms (see Example \ref{exp:nontermination:illustrative} and \ref{exp:nontermination:souza}), whilst the "continuity" of that of $\epsilon$-subdifferential leads to the guaranteed inner loop termination of the proposed tPLDCA (see Proposition \ref{pp:constraint:xi:finite} and Theorem \ref{thm:finite:termination}).

\subsection{Problem Setup}
In this paper, we consider (DCP) in (\ref{eq:dcp}) with $g,h\in\Gamma_0(\Real^n)$ under the following assumptions.
\begin{assumption}\label{assumpt:inff}
$\inf_{x\in\Real^n}f(x)>-\infty$.
\end{assumption}
\begin{assumption}\label{assumpt:domh}
$\dom{h}\coloneqq\braces{x\in\Real^n\mid h(x)<+\infty}=\Real^n$.
\end{assumption}
\begin{assumption}\label{assumpt:domg}
$g=g_0+\iota_C$, where $g_0\in\Gamma_0(\Real^n)$ with $\dom{g_0}=\Real^n$, $C$ is a nonempty closed convex set and $\iota_C$ is the indicator function defined as follows,
\begin{equation*}
\iota_C(x)=\begin{cases}
0, & \textrm{if }x\in C,\\
+\infty, & \textrm{otherwise.}
\end{cases}
\end{equation*}
\end{assumption}
\begin{assumption}\label{assumpt:solution:steptwo}
There exists an inner iterative algorithm $T_g$ such that for any input parameters $(x_k,u_k,\lambda_k)\in\dom{g}\times\Real^n\times \Real_{++}$, $T_g(x_k,u_k,\lambda_k)$ generates a sequence of inner iterates $(z_{i})_{i\in\Natural}$ in $\dom{g}$ convergent to a solution of (PP):
\begin{equation*}
(\mathrm{PP})\quad\arg\min_{z\in\Real^n} \tilde{f}_k(z)\coloneqq g(z)-\innerp{u_k}{z-x_k}+\frac{1}{2\lambda_k}\norm{z-x_k}^2.
\end{equation*}
More precisely, there exists some $ \bar{z}_{k}\in\dom{g}$ such that $\lim_{i\to +\infty}z_i=\bar{z}_{k}$ and
\begin{equation}\label{eq:barz}
0\in \partial \tilde{f}_k(\bar{z}_{k})= \partial g(\bar{z}_{k})-u_k+\frac{1}{\lambda_k}(\bar{z}_{k}-x_k).
\end{equation}
\end{assumption}
\begin{remark}[Justification for Assumption \ref{assumpt:inff}, \ref{assumpt:domh}, \ref{assumpt:domg}, \ref{assumpt:solution:steptwo}]~
\begin{enumerate}
\item[1.] Since (DCP) is supposed to find a minimizer of $f$, it is reasonable to assume that $f$ is bounded from below. 

\item[2.] If there exists $ x_*\not\in\dom{h}$, then the value of $f(x_*)$ is either $-\infty$ or $(\infty-\infty)$, thus we assume $\dom{h}=\Real^n$ to avoid unnecessary ambiguity.

\item[3.] Assumption \ref{assumpt:domg} rules out nasty instances in $\Gamma_0(\Real^n)$ that have unclosed domains or have discontinuities on the boundary of their domains.

\item[4.] Assumption \ref{assumpt:solution:steptwo} ensures feasibility of finding approximate solutions of (PP) with arbitrarily high precision.
\end{enumerate}
\end{remark}

Under the aforementioned assumptions, the following hold for (DCP) in (\ref{eq:dcp}).

\begin{lemma}\label{lemma:domf}
Define $\dom{f}\coloneqq \braces{x\in\Real^n\mid f(x)<+\infty}$, then $\dom{f}=C$ is a closed convex set.
\end{lemma}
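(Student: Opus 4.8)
The plan is to unpack the definition of $\dom f$ using the DC decomposition $f = g - h$ together with the structural assumptions on $g$ and $h$. By Assumption \ref{assumpt:domh} we have $\dom h = \Real^n$, so $h$ takes finite values everywhere; hence $f(x) = g(x) - h(x)$ is finite precisely when $g(x)$ is finite, and $f(x) = +\infty$ precisely when $g(x) = +\infty$. This reduces the claim to showing $\dom g = C$.

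First I would invoke Assumption \ref{assumpt:domg}, which gives $g = g_0 + \iota_C$ with $\dom{g_0} = \Real^n$ and $C$ a nonempty closed convex set. For any $x\in\Real^n$, the value $g(x) = g_0(x) + \iota_C(x)$ is finite if and only if both summands are finite. Since $\dom{g_0}=\Real^n$, the term $g_0(x)$ is always finite, so finiteness of $g(x)$ hinges entirely on $\iota_C(x)<+\infty$, which by the definition of the indicator function holds exactly when $x\in C$. Therefore $\dom g = C$, and combining with the first paragraph yields $\dom f = C$.

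Finally, I would note that $C$ is closed and convex directly by Assumption \ref{assumpt:domg}, which completes the proof. No genuine obstacle arises here: the statement is essentially a bookkeeping consequence of the domain assumptions, and the only mild subtlety worth making explicit is ruling out the indeterminate form $(+\infty) - (+\infty)$. That potential ambiguity is precisely what Assumption \ref{assumpt:domh} removes by forcing $h$ to be finite-valued, so at no point in evaluating $f = g - h$ do we encounter an ill-defined difference; the sign of the infinite contribution is always governed solely by $g$.
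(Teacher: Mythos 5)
Your argument is correct and follows essentially the same route as the paper: use Assumption \ref{assumpt:domh} to make $h$ finite-valued so that $\dom f = \dom g$, then read off $\dom g = C$ from Assumption \ref{assumpt:domg}. The only difference is that you spell out explicitly why $\dom g = C$ (finiteness of $g_0$ everywhere plus the indicator), which the paper leaves implicit.
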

\begin{proof}
Since $h$ is proper and $\dom{h}=\Real^n$ (Assumption \ref{assumpt:domh}), we yield that $h(x)\in\Real$ for every $x\in\Real^n$. Hence
\[\dom{f}=\braces{x\in\Real^n\mid g(x)<h(x)+\infty=+\infty},\]
which implies that $\dom{f}=\dom{g}=C$ (Assumption \ref{assumpt:domg}) is closed and convex.
\end{proof}
\begin{lemma}\label{lemma:contghf}
$g$, $h$ and $f$ are continuous over $\dom{f}$.
\end{lemma}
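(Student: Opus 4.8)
The plan is to reduce everything to the classical fact that a finite-valued convex function on $\Real^n$ is continuous on the whole space (equivalently, that a proper convex function is continuous, indeed locally Lipschitz, on the interior of its domain; see, e.g., \cite{rockafellar2015,bauschke2017}). First I would apply this to the two globally finite components of the problem. By Assumption~\ref{assumpt:domg}, $g_0\in\Gamma_0(\Real^n)$ satisfies $\dom{g_0}=\Real^n$, and by Assumption~\ref{assumpt:domh}, $h\in\Gamma_0(\Real^n)$ satisfies $\dom{h}=\Real^n$. Since each is convex and finite on all of $\Real^n$, each is continuous on all of $\Real^n$.

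Next I would treat $g$ on $\dom{f}$. By Lemma~\ref{lemma:domf} we have $\dom{f}=C$, and on $C$ the indicator term vanishes, i.e.\ $\iota_C(x)=0$ for every $x\in C$, so that $g(x)=g_0(x)$ for all $x\in C$. Reading ``continuous over $\dom{f}$'' as continuity of the restriction $g|_C$ (equivalently, $g(x_j)\to g(\bar{x})$ whenever $x_j\to\bar{x}$ with $x_j,\bar{x}\in C$), the claim is immediate: $g|_C=g_0|_C$ is the restriction to $C$ of a function continuous on $\Real^n$, hence continuous.

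For $f$ I would argue analogously. Because $h$ is finite on $\Real^n$ by Assumption~\ref{assumpt:domh}, we have $f(x)=g(x)-h(x)=g_0(x)-h(x)$ for every $x\in C=\dom{f}$, a difference of two functions each continuous on $\Real^n$. Restricting to $C$ and using that a difference of continuous functions is continuous yields continuity of $f$ over $\dom{f}$.

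The argument is essentially routine, so I expect no serious obstacle; the only point requiring a little care is the precise meaning of ``continuous over a set.'' Since $g$ and $f$ are genuinely $+\infty$ off $C$ and may well be discontinuous as functions on $\Real^n$ at boundary points of $C$, the statement must be interpreted as continuity of the \emph{restrictions} to $C$. All the sequences in the verifications above are therefore constrained to lie in $C$, which is exactly what lets the indicator term drop out and reduces the assertion to the continuity of the globally finite convex functions $g_0$ and $h$.
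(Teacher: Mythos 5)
Your proposal is correct and follows essentially the same route as the paper: both invoke the continuity of the finite-valued convex functions $g_0$ and $h$ on all of $\Real^n$ (the paper cites \cite[Coroll. 8.40]{bauschke2017}) and then observe that the restrictions of $g,h,f$ to $\dom{f}=C$ coincide with those of $g_0,h,g_0-h$. Your extra remark clarifying that ``continuous over $\dom f$'' means continuity of the restriction is a reasonable explication of what the paper leaves implicit.
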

\begin{proof}
Since $g_0,h\in\Gamma_0(\Real^n)$ and $\dom{g_0}=\dom{h}=\Real^n$, $g_0,h$ are continuous on $\Real^n$ from \cite[Coroll 8.40]{bauschke2017}. Since the restriction of $g,h,f$ to $\dom{f}=C$ are identical to the restriction of $g_0,h,g_0-h$ to $\dom{f}$ respectively, we conclude that $g,h,f$ are continuous relative to $\dom{f}$.
\end{proof}

\begin{lemma}\label{lemma:bounded:partialhS}
Suppose that $\epsilon>0$ and that $S\subset\Real^n$ is bounded, then $\partial_{\epsilon} h(S)\coloneqq \cup_{x\in S} \partial_{\epsilon} h(x)$ is bounded.
\end{lemma}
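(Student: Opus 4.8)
The plan is to exploit the defining inequality of the $\epsilon$-subdifferential together with the local boundedness of the finite convex function $h$. First I would fix a radius $R>0$ with $S\subseteq \bar{B}(0;R)$, which is possible since $S$ is bounded, and pass to the compact superset $K\coloneqq \bar{B}(0;R+1)$. Because $\dom{h}=\Real^n$ (Assumption \ref{assumpt:domh}) and $h\in\Gamma_0(\Real^n)$, $h$ is finite and continuous on $\Real^n$ (this is the fact already invoked in the proof of Lemma \ref{lemma:contghf}), hence by the Weierstrass theorem it is bounded on the compact set $K$; I would set $M\coloneqq \max_{y\in K}h(y)$ and $m\coloneqq \min_{y\in K}h(y)$, both finite.

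The key step is a directional probe of each $\epsilon$-subgradient. Take any $x\in S$ and any $s\in\partial_{\epsilon}h(x)$; if $s=0$ there is nothing to do, so assume $s\neq 0$ and set $y\coloneqq x+s/\norm{s}$. Then $\norm{y}\leq \norm{x}+1\leq R+1$, so $y\in K$. Feeding this particular $y$ into the $\epsilon$-subgradient inequality of Definition \ref{def:esubdiff} gives
\[
h(y)\geq h(x)+\innerp{s}{y-x}-\epsilon = h(x)+\norm{s}-\epsilon,
\]
and rearranging yields $\norm{s}\leq h(y)-h(x)+\epsilon\leq M-m+\epsilon$.

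Since the bound $M-m+\epsilon$ depends only on $R$ and $\epsilon$ and not on the particular choice of $x\in S$ or $s\in\partial_{\epsilon}h(x)$, I would conclude that $\partial_{\epsilon}h(S)\subseteq \bar{B}(0;M-m+\epsilon)$, which establishes the claim. I do not anticipate a genuine obstacle here: the only point requiring care is the choice of the probe direction $y-x=s/\norm{s}$, which converts $\innerp{s}{y-x}$ into $\norm{s}$ and thereby turns the $\epsilon$-subgradient inequality into an explicit a priori bound; everything else reduces to the continuity of $h$ on $\Real^n$ together with compactness of $K$.
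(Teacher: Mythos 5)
Your proof is correct and follows essentially the same route as the paper: both arguments hinge on probing the $\epsilon$-subgradient inequality at $y=x+s/\norm{s}$ to convert $\innerp{s}{y-x}$ into $\norm{s}$. The only (immaterial) difference is that you bound $h(y)-h(x)$ by $\max_K h-\min_K h$ via Weierstrass on a compact superset, whereas the paper bounds it by a Lipschitz constant of $h$ on a closed ball; both yield a uniform bound of the form (constant)$+\epsilon$.
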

\begin{proof}
Boundedness of $S$ ensures the existence of some $R>0$ satisfying\[\parentheses{\forall x\in S}\;\; \bar{B}(x;1)\subset \bar{B}(0;R).\]
For every $s_*\in\partial_{\epsilon}h(S)\setminus\braces{0}$, there exists $x_*\in S$ such that $s_*\in\partial_{\epsilon}h(x_*)$. Take $y_*=x_*+ s_*/\norm{s_*}$, then $y_*\in \bar{B}(x_*;1)\subset \bar{B}(0;R)$. Since $\dom{h}=\Real^n$ and $\bar{B}(0;R)$ is closed and bounded, $h$ is Lipschitz continuous relative to $\bar{B}(0;R)$ from \cite[Coroll. 8.41]{bauschke2017}. Let $L$ be a Lipschitz constant for $h$ on $\bar{B}(0;R)$. From Lipschitz continuity of $h$ on $\bar{B}(0;R)$ and $s_*\in\partial_{\epsilon}h(x_*)$, we have
\begin{equation*}
h(x_*)+L\geq h(y_*)\geq h(x_*)+\innerp{s_*}{{s_*}/{\norm{s_*}}}-\epsilon,
\end{equation*}
which implies $\norm{s_*}\leq L+\epsilon$. Therefore, $\partial_{\epsilon}h(S)\subset \bar{B}(0;L+\epsilon)$, thus is bounded.
\end{proof}

\section{Termination Issue of Prior Arts}
\label{sec:priorarts}
As introduced earlier, conventional DC algorithms require solving (SP) in (\ref{eq:sp}) or (PP) in (\ref{eq:pp}) exactly, which is equivalent to solving the following monotone inclusion problem:
\begin{equation}\label{eq:PP:subdiffcond}
0\in\partial \tilde{f}_k(x_{k+1})
\iff \dist\parentheses{0,\partial \tilde{f}_k(x_{k+1})}=0.
\end{equation}
Thanks to the advancement of convex optimization studies, in many cases we can expect (\ref{eq:PP:subdiffcond}) to be solved by some iterative algorithm (cf. $T_g$ in Assumption \ref{assumpt:solution:steptwo}), in the sense that it generates a sequence of inner iterates $(z_i)_{i\in\Natural}$ convergent to a solution of (\ref{eq:PP:subdiffcond}). However, such convergence generally requires an infinite amount of iterations, which yields an endless inner loop. To promote termination of the inner loop and also to permit calculation error, a natural solution is to relax (\ref{eq:PP:subdiffcond}) into
\begin{equation*}
(\mathrm{AP})\quad\dist(0,\partial \tilde{f}_k(x_{k+1}))\leq \epsilon_k
\end{equation*}
or its variants (e.g., (\ref{eq:constraint:xik:souza}) in Algorithm \ref{alg:souza}), and expect that (AP) can be solved within finite iterations of $T_g$, where $\epsilon_k>0$ is a threshold parameter.

The preceding idea constitutes the basic strategy of existing inexact DC algorithms \cite{souza2016,oliveira2019,lu2019}. However, the intuition that (AP) can be solved in finite inner iterations relies on an implicit belief, that is, $\dist(0,\partial \tilde{f}_k(z_i))$ can become sufficiently small as long as the inner estimate $z_i$ gets close enough to a solution of (\ref{eq:PP:subdiffcond}). Unfortunately, if we recall Example \ref{exp:discontinuity:subdifferential}, we would realize that such belief does not always hold because of discontinuity of the distance function to subdifferential. In the sequel, we present counterexamples to support this claim.

We first give an illustrative example to show that inexact DCAs relaxing (SP) to (AP) (e.g., \cite{lu2019}) may face termination issues in the inner loop.

\begin{example}\label{exp:nontermination:illustrative}
Consider the simplest 1D case where $g(\cdot)\coloneqq \abs{\cdot}$. Suppose that $u_k=0$, then $\tilde{f}_k(x)=\abs{x}$ in (SP) in this case. It can be verified that for every $\epsilon_k\in (0,1)$, 
\begin{equation*}
\mathrm{(SP)}\iff(\ref{eq:PP:subdiffcond})\iff \mathrm{(AP)}\iff x_{k+1}=0,\end{equation*}
which means that in this case, the relaxed problem (AP) is as difficult as the exact version of (SP). Hence as long as (SP) cannot be solved within finite inner iterations, the same is (AP). Moreover, if $\epsilon_k\in[1,+\infty)$, it can be verified that the solution set of (AP) is $\Real$, which yields a trivial relaxation of (SP). Therefore, whatever value of the threshold parameter $\epsilon_k$ is adopted, (AP) does not serve as a favourable relaxation of (SP) for this simple example. 
\end{example}

Next, we take the inexact PLDCA in \cite{souza2016}  (see Algorithm \ref{alg:souza}) as a representative, and present a more concrete example to show that relaxing (PP) into a variant of (AP) (i.e., (\ref{eq:constraint:xik:souza}) in Algorithm \ref{alg:souza}) may lead to endless inner loop. We note that (9) in \cite{oliveira2019} follows the same relaxation strategy, thus our example also applies there.
\begin{algorithm}
\textbf{Parameters:} Set $\theta>0$, $\sigma\in[0,1)$ and a bounded sequence of positive numbers $(\lambda)_{k\in\Natural}$ such that $\liminf_{k\to +\infty} \lambda_k>0$.

\vspace{1em}
Set $k=0$, $x_0\in\dom{f}$. Repeat the following steps until convergence.
\vspace{1em}

\textbf{Step 1:} find $u_k$ such that
\begin{equation}\label{eq:constraint:uk:souza}
u_k\in\partial h(x_k).
\end{equation}

\vspace{0.5em}
\textbf{Step 2:} set $z_{-1}=x_k$ and generate $z_0,z_1,\dots$ by $T_g(x_k,u_k,\lambda_k)$ until
\begin{equation}\label{eq:decrease:g:souza}
g(x_{k})-g(z_i)-\innerp{u_{k}}{x_k-z_i}\geq \frac{(1-\sigma)}{2\lambda_k}\norm{z_i-x_k}^2
\end{equation}
$\quad\quad\quad\;\;\;$and
\begin{equation}\label{eq:constraint:xik:souza}
\dist(u_k,\partial g(z_i))\leq \theta \norm{z_i-x_k}
\end{equation}
$\quad\quad\quad\;\;\;$hold simultaneously for some $i\in\braces{-1}\cup\Natural$. Then set $x_{k+1}\coloneqq z_i$.

\textbf{Step 3:} $k\leftarrow k+1$.
\caption{Souza, Oliveira and Soubeyran's algorithm \cite{souza2016}}
\label{alg:souza}
\end{algorithm}

In Algorithm \ref{alg:souza}, the step 1 performs exact computation of a subgradient of $h$ at $x_k$, and the step 2 updates the new iterate $x_{k+1}$ with an approximate solution of (PP) satisfying (\ref{eq:decrease:g:souza}) and (\ref{eq:constraint:xik:souza}) \footnote{In order to conduct a fair comparison with the proposed method, here we set $z_{-1}=x_k$ and generate the other inner iterates by $T_g$, as the proposed tPLDCA (Algorithm \ref{alg:concept}) does.}. One can verify from Assumption \ref{assumpt:solution:steptwo} that the sequence of inner iterates $(z_i)_{i\in\Natural}$ converges to a solution of (PP).

Here we focus on the inner loop of step 2. Replacing $z_i$ by $x_{k+1}$, we note that (\ref{eq:decrease:g:souza}) is equivalent to
\begin{equation*}
\tilde{f}_k(x_{k+1})\leq \tilde{f}_k(x_k)+\frac{\sigma}{2\lambda_k}\norm{x_{k+1}-x_k}^2,
\end{equation*}
which serves as a descent condition for some auxiliary function involving $f$ (cf. Theorem \ref{thm:convergence:functional}(a)); see (\ref{eq:pp}) for the expression of $\tilde{f}_k$. On the other hand, one can verify that the condition (\ref{eq:constraint:xik:souza}) implies that
\begin{equation*}
\dist\parentheses{u_k+\frac{1}{\lambda_k}(x_{k+1}-x_k),\partial g(x_{k+1})}\leq \parentheses{\theta+\frac{1}{\lambda_k}}\norm{x_{k+1}-x_k}^2,
\end{equation*}
which is a equivalent to
\begin{equation*}
\dist\parentheses{0,\partial \tilde{f}_k(x_{k+1})}\leq \parentheses{\theta+\frac{1}{\lambda_k}}\norm{x_{k+1}-x_k}^2,
\end{equation*}
thus (\ref{eq:constraint:xik:souza}) is a variant of (AP). Therefore, to show that Algorithm \ref{alg:souza} may face a termination issue in the inner loop of step 2, it is sufficient to prove: \textit{whatever values of $\theta, \lambda_k$ are adopted, there exists $(x_k,u_k)\in\Real^{n}\times\Real^n$ and a sequence $(z_i)_{i\in\Natural}$ satisfying Assumption \ref{assumpt:solution:steptwo}, such that for the resulting problem instance, (\ref{eq:constraint:xik:souza}) cannot be satisfied within finite iterations of $(z_i)_{i\in\Natural}$}. Our counterexample is as follows.
\begin{example}\label{exp:nontermination:souza}
Consider the simplest 1D case where $g(\cdot)=\abs{\cdot}$, then for given hyperparameters $\theta>0$ and $\lambda_k>0$, we analyse the case where $u_k=0$, $x_k\in\Real$ is a fixed real number. Then it can be verified that (PP) has a unique solution with the following closed-form expression:
\begin{equation*}
\bar{z}_{k}=\Prox_{\lambda_k g}(x_k)=
\begin{cases}
0, & \text{if } \abs{x_k}\leq\lambda_k\\
\mathrm{sign}(x_k)(\abs{x_k}-\lambda_k), & \text{if }\abs{x_k}> \lambda_k
\end{cases}
\end{equation*}
where $\Prox_{\lambda_k g}$ is the proximity operator of $\lambda_k g$ \cite[Sec. 12.4]{bauschke2017}. 

Let $x_k=\min\{1/(2\theta),\lambda_k\}$, then $\bar{z}_{k}=\Prox_{\lambda_k g}(x_k)=0$. Let the sequence of inner estimates $(z_i)_{i\in\Natural}$ be as follows,
\begin{equation*}
z_i=\frac{x_k}{2^i},\;\; i=0,1,\dots ,
\end{equation*}
then $\lim_{i\to +\infty} z_i=0=\bar{z}_{k}$, which verifies that $(z_i)_{i\in\Natural}$ satisfies Assumption \ref{assumpt:solution:steptwo}.

Now let us check (\ref{eq:constraint:xik:souza}) for the problem instance where 
\[u_k=0,\;\; x_k=\min\{1/(2\theta),\lambda_k\}\;\; \textrm{and}\;\; z_i=x_k/(2^i)\;\; \textrm{for every}\;\; i\in\Natural.\]
Note that $z_{-1}=x_k=z_0$, we only need to check $z_i$ for $i\in\Natural$. Since $z_i>0$ for every $i\in\Natural$, we have $\partial g(z_i)\equiv \braces{+1}$, which implies the following:
\begin{equation*}
(\forall i\in\Natural)\;\;\dist(u_k,\partial g(z_{i}))\equiv 1>\frac{1}{2}\geq \parentheses{1-\frac{1}{2^{i}}}\theta x_k=\theta\norm{z_{i}-x_k}.
\end{equation*}
Therefore, (\ref{eq:constraint:xik:souza}) cannot be satisfied within finite iterations of $(z_i)_{i\in\Natural}$.
\end{example}

Example \ref{exp:nontermination:illustrative} and \ref{exp:nontermination:souza} reveal a critical limitation of prior studies \cite{souza2016,oliveira2019,lu2019}, that is, \textit{even with a convergent inner iterative algorithm, there is no proof that the naive relaxation (AP) leads to a finitely terminating inner loop}. (Remark: Oliveira and Tcheou \cite{oliveira2019} proved the inner loop termination of their inertial DC algorithm if a specific bundle like algorithm is employed as the inner algorithm. However, their proof does not generalize to other inner iterative algorithms.)

Fortunately, this termination issue can be resolved by the proposed tPLDCA that follows, via exploiting the "continuity" property of $\epsilon$-strict subdifferential.

\section{The Proposed Inexact PLDCA with Terminating Inner Loop}
\label{sec:conceptual}
\begin{algorithm}
\textbf{Parameters:} Set $\sigma\in(0,1),\lambda>0,\theta>\frac{1}{\lambda},\rho\in\Big[0,\frac{1-\sigma}{\lambda}\Big),\gamma\in\Big[0,\frac{(1-\sigma)}{\lambda}-\rho\Big)$, $\lambda_k\coloneqq \lambda$ for every $k\in\Natural$ and set a sequence of positive numbers $(\zeta_k)_{k\in\Natural}$ such that $\lim_{k\to +\infty} \zeta_k=0$.

\vspace{1em}
Set $k=0$, $x_{-1}, x_0\in\dom{f}$. Repeat the following steps until convergence.
\vspace{1em}

\textbf{Step 1:} find $u_k\in\Real^n$ such that
\begin{equation}\label{eq:constraint:uk}
\dist(u_k,\partial_{\epsilon_k}h(x_k))\leq d_k,
\end{equation}
$\quad\quad\quad\;\;\;$where $\epsilon_k=\rho\norm{x_k-x_{k-1}}^2$, $d_k=\gamma\norm{x_k-x_{k-1}}$.

\textbf{Step 2:} set $z_{-1}=x_k$ and generate $z_0,z_1,\dots$ by $T_g(x_k,u_k,\lambda_k)$ until
\begin{equation}\label{eq:decrease:g}
g(x_k)-g(z_{i})-\innerp{u_k}{x_k-z_{i}}\geq \frac{(1-\sigma)}{\lambda_k}\norm{z_{i}-x_k}^2
\end{equation}
$\quad\quad\quad\;\;\;$and
\begin{equation}\label{eq:constraint:xik}
\dist(u_k,\partial_{\zeta_k}g(z_{i}))\leq \theta\norm{z_{i}-x_k}.
\end{equation}
$\quad\quad\quad\;\;\;$hold simultaneously for some $i\in\braces{-1}\cup\Natural$. Then set $x_{k+1}\coloneqq z_i$.

\textbf{Step 3:} $k\leftarrow k+1$.
\caption{Inexact PLDCA with Terminating Inner Loop (tPLDCA)}
\label{alg:concept}
\end{algorithm}

In this section, we propose an inexact PLDCA with terminating inner loop and term this algorithm tPLDCA ("t" means terminating). The proposed tPLDCA (see Algorithm \ref{alg:concept}) serves as an extension of Algorithm \ref{alg:souza}, where the improvements we made are as follows:
\begin{enumerate}[label=(\alph*)]
\item In the step 1, we relax (\ref{eq:constraint:uk:souza}) into (\ref{eq:constraint:uk}) to permit calculation error in the computation of subgradients.

\item In the step 2, we relax $\partial g(z_i)$ in (\ref{eq:constraint:xik:souza}) into $\partial_{\zeta_k} g(z_i)$ in (\ref{eq:constraint:xik}), whereby we obtain a looser constraint for $x_{k+1}$.
\end{enumerate}
With respect to the advantages of the improvements above, we note:
\begin{enumerate}[label=(\alph*)]
\item In some previous studies \cite{sun2003}, $u_k\in\partial_{\epsilon_k}h(x_k)$ has been adopted in the step 1 to permit inexactness. However, since $\partial_{\epsilon_k}h(x_k)$ does not necessarily cover a neighbourhood of $\partial h(x_k)$ (e.g., consider the case where $h$ is affine), the prior arts only permit calculation error to a very limited extent. In contrast, it can be verified that the solution set of (\ref{eq:constraint:uk}) always covers a neighbourhood of $\partial h(x_k)$, which indeed improves the robustness of the DC algorithm. 

\item As will be shown in Sec. \ref{subsec:finite:terminate:CiPLDCA}, the relaxation (\ref{eq:constraint:xik}) actually ensures finite termination of $T_g(x_k,u_k,\lambda_k)$ in the step 2, which makes an essential difference from existing inexact DC algorithms \cite{souza2016,oliveira2019,lu2019}.
\end{enumerate}
In the sequel, we present favourable properties of the proposed tPLDCA, i.e., finite termination of the inner loop and convergence properties.

\subsection{Finite Termination of the Inner Loop}
\label{subsec:finite:terminate:CiPLDCA}
In this section, we prove the most distinguished property of tPLDCA, that is: \textit{the step 2 of tPLDCA provably terminates in finite inner iterations, whatever algorithm $T_g$ (satisfying Assumption \ref{assumpt:solution:steptwo}) is adopted}.

Before proving this claim, we introduce a certain important continuity property of $\epsilon$-subdifferential which accounts for the finite inner loop termination.

\subsubsection{A Continuity Property of $\epsilon$-Subdifferential}
\label{sec:continuity:esubdiff}
Recall Example \ref{exp:discontinuity:subdifferential}, \ref{exp:nontermination:illustrative} and \ref{exp:nontermination:souza}, one may realize that the failure of (\ref{eq:constraint:xik:souza}) to be satisfied in finite iterations originates from the discontinuity of the distance function to subdifferential. More particularly, even if the inner iterate $z_i$ converges to $\bar{z}_{k}$ (see Assumption \ref{assumpt:solution:steptwo} for the definition), $\lim_{i\to +\infty}\dist\parentheses{u_k,\partial g(z_i)}$ may deviate from $\dist\parentheses{u_k,\partial g(\bar{z}_{k})}$. In contrast, $\epsilon$-subdifferential does not suffer from such discontinuity. This property of $\epsilon$-subdifferential is formalized as follows.
\begin{proposition}\label{pp:continuity:dist:esubdiff}
Suppose that $\zeta>0$, $\bar{z}\in\dom{g}$, $\bar{v}\in\partial g(\bar{z})$. Then for any sequence $(z_i)_{i\in\Natural}$ in $\dom{g}$ convergent to $\bar{z}$, the following holds:
\begin{equation*}
\lim_{i\to+\infty}\dist\parentheses{\bar{v},\partial_{\zeta}g(z_i)}=\dist\parentheses{\bar{v},\partial_{\zeta}g(\bar{z})}=0.
\end{equation*}
\end{proposition}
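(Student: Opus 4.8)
The plan is to split the statement into its two equalities and dispatch the easy one first. The right-hand equality $\dist(\bar{v},\partial_{\zeta}g(\bar{z}))=0$ is immediate: every subgradient is an $\epsilon$-subgradient for any $\epsilon>0$, since the defining inequality in Definition \ref{def:esubdiff} is only weakened relative to that of the subdifferential by subtracting $\zeta>0$ on the right. Hence $\bar{v}\in\partial g(\bar{z})\subseteq\partial_{\zeta}g(\bar{z})$, and the distance from a point to a set containing it is zero.

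For the limit, the key device is the transportation formula (Proposition \ref{pp:transportation:formula}). I would apply it with $\phi=g$, $x'=\bar{z}$, $s'=\bar{v}\in\partial g(\bar{z})$, $x=z_i$ and $\epsilon=\zeta$, which converts the membership $\bar{v}\in\partial_{\zeta}g(z_i)$ into the single scalar inequality
\[
g(\bar{z})\geq g(z_i)+\innerp{\bar{v}}{\bar{z}-z_i}-\zeta.
\]
It therefore suffices to show that this inequality holds for all sufficiently large $i$: once it does, $\bar{v}\in\partial_{\zeta}g(z_i)$ and consequently $\dist(\bar{v},\partial_{\zeta}g(z_i))=0$ from some index onward, which forces the limit to be $0$.

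To verify the inequality for large $i$, I would pass to the limit on its right-hand side. Since $(z_i)_{i\in\Natural}\subset\dom g$ converges to $\bar{z}\in\dom g$, Lemma \ref{lemma:contghf} (continuity of $g$ relative to its domain, recalling $\dom g=\dom f=C$) gives $g(z_i)\to g(\bar{z})$, while Cauchy--Schwarz yields $\abs{\innerp{\bar{v}}{\bar{z}-z_i}}\leq\norm{\bar{v}}\norm{\bar{z}-z_i}\to 0$. Hence the right-hand side converges to $g(\bar{z})-\zeta$, which is strictly smaller than $g(\bar{z})$ precisely because $\zeta>0$. Consequently the inequality is satisfied for all large $i$, and the limit follows.

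The argument presents no serious obstacle; the one point demanding care is that the strict positivity of $\zeta$ is exactly what supplies the slack $g(\bar{z})-(g(\bar{z})-\zeta)=\zeta>0$ needed to absorb the vanishing perturbation $\innerp{\bar{v}}{\bar{z}-z_i}$ together with the fluctuation of $g(z_i)$. This is the ingredient absent for the ordinary subdifferential, and it is the structural reason behind the discontinuity exhibited in Example \ref{exp:discontinuity:subdifferential}. I would also remark that the conclusion obtained is in fact stronger than stated: the distances are not merely convergent to $0$ but identically $0$ for all indices past some threshold.
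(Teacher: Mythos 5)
Your proof is correct and follows essentially the same route as the paper's: both reduce the claim to showing $\bar{v}\in\partial_{\zeta}g(z_i)$ for all sufficiently large $i$ via the transportation formula (Proposition \ref{pp:transportation:formula}), with the strict positivity of $\zeta$ providing the slack that continuity of $g$ on $\dom g$ absorbs. The paper packages the limiting argument as nonnegativity of the continuous function $D(z)=g(\bar{z})-g(z)-\innerp{\bar{v}}{\bar{z}-z}+\zeta$ near $\bar{z}$, which is the same computation you carry out by passing to the limit on the right-hand side of the inequality.
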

\begin{proof}
It is sufficient to show that there exists $N\in\Natural$ such that
\begin{equation}\label{eq:inclusion:Nk}
(\forall i\geq N)\;\; \bar{v}\in\partial_{\zeta} g(z_i).
\end{equation}
Consider the following continuous function over $\dom{g}$:
\begin{equation*}
D(z)\coloneqq g(\bar{z})-g(z)-\innerp{\bar{v}}{\bar{z}-z}+\zeta.
\end{equation*}
Since $D(\bar{z})=\zeta>0$ and $z_i\to \bar{z}$, there exists $N\in\Natural$ such that
\begin{equation*}
(\forall i\geq N)\;\; D(z_i)\geq 0.
\end{equation*}
Expanding the expression of $D(z_i)$ yields that
\begin{equation*}
(\forall i\geq N)\;\;g(\bar{z})\geq g(z_i)+\innerp{\bar{v}}{\bar{z}-z_i}-\zeta.
\end{equation*}
Since $\bar{v}\in\partial g(\bar{z})$, the preceding inequality implies that
\begin{equation*}
(\forall i\geq N)\;\; \bar{v}\in\partial_{\zeta} g(z_i).
\end{equation*}
from Proposition \ref{pp:transportation:formula}, which completes the proof.
\end{proof}
Generalizing the proposition above yields a certain continuity property of $\epsilon$-subdifferential. As will be shown in Proposition \ref{pp:constraint:xi:finite}, this continuity property essentially ensures that (\ref{eq:constraint:xik}), i.e., the counterpart of (\ref{eq:constraint:xik:souza}) adopting $\epsilon$-subdifferential, can be satisfied within finite inner iterations.
\begin{theorem}[Upper semicontinuity of $\dist\parentheses{u,\partial_{\zeta}g(\cdot)}$]\label{thm:upper:semicont}
Suppose that $\zeta>0$, $u\in\Real^n$. Then $\dist\parentheses{u,\partial_{\zeta}g(\cdot)}$ is upper semicontinuous over $\dom{g}$, i.e., for any sequence $(z_i)_{i\in\Natural}$ in $\dom{g}$ convergent to $\bar{z}\in\dom{g}$, the following holds:
\begin{equation*}
\limsup_{i\to+\infty}\dist\parentheses{u,\partial_{\zeta}g(z_i)}\leq \dist\parentheses{u,\partial_{\zeta}g(\bar{z})}.
\end{equation*}
\end{theorem}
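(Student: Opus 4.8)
The plan is to reduce the claimed upper semicontinuity to an explicit construction: for each large $i$ I would exhibit a genuine $\zeta$-subgradient $s_i\in\partial_{\zeta}g(z_i)$ that lies close to the point of $\partial_{\zeta}g(\bar{z})$ nearest to $u$. By Proposition \ref{pp:convexity:esubdiff}, $\partial_{\zeta}g(\bar{z})$ is nonempty, closed and convex, so the distance $d\coloneqq\dist\parentheses{u,\partial_{\zeta}g(\bar{z})}$ is attained at some $\bar{s}\in\partial_{\zeta}g(\bar{z})$ with $\norm{u-\bar{s}}=d$. Since $\dist\parentheses{u,\partial_{\zeta}g(z_i)}\leq\norm{u-s_i}$ for any $s_i\in\partial_{\zeta}g(z_i)$, it suffices to produce such $s_i$ with $\limsup_{i\to+\infty}\norm{u-s_i}\leq d$.

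It is instructive to first attempt the obvious candidate $s_i=\bar{s}$, because that is exactly where the obstacle sits. For $z\in\dom{g}$ define the deficiency functional $e_z(s)\coloneqq\sup_{y\in\Real^n}\brackets{g(z)+\innerp{s}{y-z}-g(y)}$, so that by Definition \ref{def:esubdiff} one has $s\in\partial_{\zeta}g(z)\iff e_z(s)\leq\zeta$. A direct computation shows that the bracketed expressions for $e_{z_i}$ and $e_{\bar{z}}$ differ, for \emph{every} $y$, by the same $y$-independent quantity, whence
\begin{equation*}
e_{z_i}(s)=e_{\bar{z}}(s)+g(z_i)-g(\bar{z})-\innerp{s}{z_i-\bar{z}}.
\end{equation*}
Since $z_i\to\bar{z}$ and $g(z_i)\to g(\bar{z})$ (continuity of $g$ over $\dom{g}$, Lemma \ref{lemma:contghf}), the perturbation term vanishes in the limit; but it need not be nonpositive, so $\bar{s}$ in general only satisfies $e_{z_i}(\bar{s})\leq\zeta+\delta_i$ with $\delta_i\to0^{+}$, i.e. $\bar{s}$ is a $(\zeta+\delta_i)$-subgradient, not a genuine $\zeta$-subgradient, at $z_i$. \textbf{This is the main difficulty}, and it is the same discontinuity phenomenon that defeats the naive relaxation (AP).

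To repair it I would retract $\bar{s}$ slightly toward an \emph{exact} subgradient. Pick any $w\in\partial g(\bar{z})$; this set is nonempty because $\dom{g_0}=\Real^n$ makes $g_0$ finite and continuous at $\bar{z}$, so $\partial g_0(\bar{z})\neq\emptyset$, and the subdifferential sum rule applied to $g=g_0+\iota_C$ yields $\partial g(\bar{z})\neq\emptyset$. Then $e_{\bar{z}}(w)=0$ (subgradient inequality, with equality at $y=\bar{z}$), while $e_{\bar{z}}(\bar{s})\leq\zeta$. Because $s\mapsto e_z(s)$ is convex (a supremum of affine functions of $s$), the segment $s_t\coloneqq(1-t)\bar{s}+tw$, $t\in[0,1]$, obeys $e_{\bar{z}}(s_t)\leq(1-t)\zeta$. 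Substituting into the perturbation identity and using the uniform bound $\norm{s_t}\leq M\coloneqq\max\{\norm{\bar{s}},\norm{w}\}$ on the segment gives
\begin{equation*}
e_{z_i}(s_t)\leq(1-t)\zeta+\eta_i,\qquad \eta_i\coloneqq\abs{g(z_i)-g(\bar{z})}+M\norm{z_i-\bar{z}}\to0.
\end{equation*}

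Finally I would choose $t_i\coloneqq\min\{1,\eta_i/\zeta\}$, so that $t_i\to0$ and, for all $i$ large enough that $\eta_i\leq\zeta$, one gets $e_{z_i}(s_{t_i})\leq(1-t_i)\zeta+\eta_i=\zeta$, that is $s_{t_i}\in\partial_{\zeta}g(z_i)$. Since $s_{t_i}=\bar{s}+t_i(w-\bar{s})\to\bar{s}$, it follows that
\begin{equation*}
\dist\parentheses{u,\partial_{\zeta}g(z_i)}\leq\norm{u-s_{t_i}}\leq d+t_i\norm{w-\bar{s}}\longrightarrow d,
\end{equation*}
which yields $\limsup_{i\to+\infty}\dist\parentheses{u,\partial_{\zeta}g(z_i)}\leq d$, as claimed. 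I would remark that Proposition \ref{pp:continuity:dist:esubdiff} is the special case $u=\bar{v}\in\partial g(\bar{z})$ (where $d=0$), and that the positivity of $\zeta$ is essential here: it is precisely the slack $t_i\zeta$ created by the retraction that absorbs the perturbation $\eta_i$, whereas for ordinary subdifferentials ($\zeta=0$) no such budget exists.
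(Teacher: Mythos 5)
Your proof is correct, but it takes a genuinely different route at the decisive step, and in doing so it actually closes a gap that the paper's own argument leaves open. Both proofs begin identically: project $u$ onto the nonempty closed convex set $\partial_{\zeta}g(\bar{z})$ to get a nearest point ($\bar{u}$ in the paper, $\bar{s}$ in your write-up), and reduce the claim via the triangle inequality to showing that this nearest point can be approximated by genuine elements of $\partial_{\zeta}g(z_i)$. The paper disposes of this last step by citing Proposition \ref{pp:continuity:dist:esubdiff}; however, that proposition is stated and proved (via the transportation formula) only for points of the \emph{exact} subdifferential $\partial g(\bar{z})$, whereas the projection $\bar{u}$ is merely known to lie in $\partial_{\zeta}g(\bar{z})$ --- which is precisely the obstacle you isolate: a $\zeta$-subgradient at $\bar{z}$ transports to $z_i$ only as a $(\zeta+\delta_i)$-subgradient with $\delta_i\to 0^{+}$. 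Your repair --- retracting $\bar{s}$ along the segment $s_t=(1-t)\bar{s}+tw$ toward an exact subgradient $w\in\partial g(\bar{z})$, exploiting the convexity of the deficiency functional $e_z(\cdot)$ and the translation identity $e_{z_i}(s)=e_{\bar{z}}(s)+g(z_i)-g(\bar{z})-\innerp{s}{z_i-\bar{z}}$, and then choosing $t_i=\min\{1,\eta_i/\zeta\}$ so that the slack $t_i\zeta$ absorbs the perturbation $\eta_i$ --- produces bona fide elements $s_{t_i}\in\partial_{\zeta}g(z_i)$ with $s_{t_i}\to\bar{s}$, which is exactly the strengthening of Proposition \ref{pp:continuity:dist:esubdiff} (from $\bar{v}\in\partial g(\bar{z})$ to arbitrary points of $\partial_{\zeta}g(\bar{z})$) that the paper's proof implicitly relies on. All the auxiliary claims you use check out: $\partial g(\bar{z})\neq\emptyset$ under Assumption \ref{assumpt:domg}, $\norm{s_t}\leq\max\{\norm{\bar{s}},\norm{w}\}$ on the segment, and $\eta_i\to 0$ by Lemma \ref{lemma:contghf} since the $z_i$ stay in $\dom{g}$. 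So the theorem and its consequences stand, and your argument is the more self-contained and rigorous of the two.
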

\begin{proof}
Since $\partial_{\zeta}g(\bar{z})$ is a nonempty closed convex set (Proposition \ref{pp:convexity:esubdiff}), the projection theorem \cite[Thm. 3.16]{bauschke2017} ensures the existence of $\bar{u}\in\partial_{\zeta}g(\bar{z})$ satisfying
\begin{equation*}
\dist\parentheses{u,\partial_{\zeta}g(\bar{z})}=\norm{u-\bar{u}}.
\end{equation*}
Similarly, for every $i\in\Natural$, there exists $u_i\in\partial_{\zeta}g(z_i)$ such that
\begin{equation*}
\dist\parentheses{\bar{u},\partial_{\zeta}g(z_i)}=\norm{\bar{u}-u_i},
\end{equation*}
which implies
\begin{equation}\label{eq:upper:cont:a}
\dist\parentheses{u,\partial_{\zeta}g(z_i)} \leq\norm{u-u_i}\leq \norm{\bar{u}-u_i}+\norm{u-\bar{u}}\leq \dist\parentheses{\bar{u},\partial_{\zeta}g(z_i)}+\norm{u-\bar{u}}. 
\end{equation}
Since $\lim_{i\to +\infty}\dist\parentheses{\bar{u},\partial_{\zeta}g(z_i)}=0$ from Proposition \ref{pp:continuity:dist:esubdiff}, taking the limit superior $i\to +\infty$ on both sides of (\ref{eq:upper:cont:a}) yields
\begin{equation*}
\limsup_{i\to +\infty} \dist\parentheses{u,\partial_{\zeta}g(z_i)}\leq \norm{u-\bar{u}}=\dist\parentheses{u,\partial_{\zeta}g(\bar{z})},
\end{equation*}
which completes the proof.
\end{proof}

\subsubsection{Proof of Finite Inner Loop Termination}
To prove the finite termination of the step 2, we only need to show that for any $T_g$ satisfying Assumption \ref{assumpt:solution:steptwo}, there exists some $N_k\in\braces{-1}\cup\Natural$, such that $z_{N_k}$ satisfies both (\ref{eq:decrease:g}) and (\ref{eq:constraint:xik}). In the sequel, we deal with these two conditions separately and prove that both of them are satisfied for large enough $i$, under the assumption that $x_k$ does not coincide with a solution of (PP).

\begin{proposition}[Achievability of (\ref{eq:decrease:g})]\label{pp:decrease:g:finite}
Suppose that $x_k$ is not a solution of (PP), then (\ref{eq:decrease:g}) remains satisfied after finite iterations of $T_g(x_k,u_k,\lambda_k)$, i.e., there exists $N_k\in \Natural$ such that
\begin{equation*}
(\forall i\geq N_k)\;\;g(x_k)-g(z_{i})-\innerp{u_k}{x_k-z_{i}}\geq \frac{(1-\sigma)}{\lambda}\norm{z_{i}-x_k}^2.
\end{equation*}
\end{proposition}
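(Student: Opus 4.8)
The plan is to reformulate the descent condition (\ref{eq:decrease:g}) in terms of the strongly convex objective $\tilde{f}_k$ and then exploit continuity of $\tilde{f}_k$ together with the strong-convexity gap at the minimizer. First I would observe that evaluating $\tilde{f}_k$ at $z=x_k$ annihilates the linear and quadratic terms, so $\tilde{f}_k(x_k)=g(x_k)$, while for general $z$ one has $g(z)-\innerp{u_k}{z-x_k}=\tilde{f}_k(z)-\frac{1}{2\lambda}\norm{z-x_k}^2$. Substituting these into the left-hand side of (\ref{eq:decrease:g}) and collecting the quadratic terms (recall $\lambda_k=\lambda$), the condition (\ref{eq:decrease:g}) is seen to be equivalent to
\begin{equation*}
\tilde{f}_k(x_k)-\tilde{f}_k(z_i)\geq \frac{1}{\lambda}\parentheses{\frac{1}{2}-\sigma}\norm{z_i-x_k}^2,
\end{equation*}
i.e.\ to $\Phi(z_i)\geq 0$ for the continuous function $\Phi(z)\coloneqq\tilde{f}_k(x_k)-\tilde{f}_k(z)-\frac{1}{\lambda}\parentheses{\frac{1}{2}-\sigma}\norm{z-x_k}^2$.

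Next I would use that $\tilde{f}_k$ is $\frac{1}{\lambda}$-strongly convex (the term $\frac{1}{2\lambda}\norm{\cdot-x_k}^2$ contributes this modulus, and the convex $g$ and the linear term do not destroy it), so (PP) has a unique minimizer, which by Assumption \ref{assumpt:solution:steptwo} is exactly the limit $\bar{z}_{k}$ of $(z_i)_{i\in\Natural}$. Since $x_k$ is assumed not to be a solution of (PP), $x_k\neq\bar{z}_{k}$, and the strong-convexity inequality at the minimizer gives $\tilde{f}_k(x_k)-\tilde{f}_k(\bar{z}_{k})\geq \frac{1}{2\lambda}\norm{x_k-\bar{z}_{k}}^2>0$. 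Evaluating $\Phi$ at the limit point and inserting this bound, the two quadratic contributions combine to
\begin{equation*}
\Phi(\bar{z}_{k})\geq \frac{1}{\lambda}\norm{x_k-\bar{z}_{k}}^2\brackets{\frac{1}{2}-\parentheses{\frac{1}{2}-\sigma}}=\frac{\sigma}{\lambda}\norm{x_k-\bar{z}_{k}}^2>0,
\end{equation*}
where $\sigma\in(0,1)$ is used for strict positivity; note that the sign of $\frac{1}{2}-\sigma$ is irrelevant here, as it cancels against the strong-convexity term.

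Finally, since $g$ is continuous on $\dom{g}$ (Lemma \ref{lemma:contghf}) and both $(z_i)_{i\in\Natural}$ and its limit $\bar{z}_{k}$ lie in $\dom{g}$ (Assumption \ref{assumpt:solution:steptwo}), the function $\tilde{f}_k$, and hence $\Phi$, is continuous along $(z_i)_{i\in\Natural}$, so $\Phi(z_i)\to\Phi(\bar{z}_{k})>0$. Thus there exists $N_k\in\Natural$ with $\Phi(z_i)\geq 0$ for all $i\geq N_k$, which is precisely (\ref{eq:decrease:g}). I expect the only delicate point to be justifying the strict sign $\Phi(\bar{z}_{k})>0$: this is where both the strong-convexity modulus $1/\lambda$ and the hypothesis $x_k\neq\bar{z}_{k}$ (equivalently, $x_k$ not solving (PP)) are essential, since without a strict gap the limit could equal zero and finite termination would fail. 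The reformulation and the continuity passage to the limit are then routine given Lemma \ref{lemma:contghf} and Assumption \ref{assumpt:solution:steptwo}.
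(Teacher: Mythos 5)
Your proof is correct and is essentially the paper's own argument in a different packaging: rewriting (\ref{eq:decrease:g}) as $\Phi(z_i)\geq 0$ with $\Phi$ expressed through $\tilde f_k$ and invoking the $\tfrac{1}{\lambda}$-strong-convexity growth bound at the minimizer $\bar z_k$ reproduces exactly the paper's key estimate $D(\bar z_k)\geq \tfrac{\sigma}{\lambda}\norm{\bar z_k-x_k}^2>0$, which the paper obtains equivalently from the subgradient inequality of $g$ at $\bar z_k$ with the subgradient $u_k-\tfrac{1}{\lambda}(\bar z_k-x_k)$ supplied by (\ref{eq:barz}). The concluding step (continuity of the slack function on $\dom{g}$ plus $z_i\to\bar z_k$) is identical to the paper's, so the proposal is sound as written.
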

\begin{proof}
From Assumption \ref{assumpt:solution:steptwo}, $0\not\in \tilde{f}_k(x_k)$ and $\lambda_k=\lambda$, there exists $\bar{z}_{k}(\neq x_k)\in\dom{g}$ such that $u_k-\frac{1}{\lambda}(\bar{z}_{k}-x_k)\in\partial g(\bar{z}_{k})$, thus we have
\begin{equation*}
g(x_k)\geq g(\bar{z}_{k})+\innerp{u_k-\frac{1}{\lambda}(\bar{z}_{k}-x_k)}{x_k-\bar{z}_{k}}, 
\end{equation*}
which can be transformed into
\begin{equation*}
g(x_k)-g(\bar{z}_{k})-\innerp{u_k}{x_k-\bar{z}_{k}}-\frac{(1-\sigma)}{\lambda}\norm{\bar{z}_{k}-x_k}^2\geq \frac{\sigma}{\lambda}\norm{\bar{z}_{k}-x_k}^2>0.
\end{equation*}
Now consider the following continuous function $D(z)$ over $\dom{g}$:
\begin{equation*}
D(z)=g(x_k)-g(z)-\innerp{u_k}{x_k-z}-\frac{(1-\sigma)}{\lambda}\norm{z-x_k}^2.
\end{equation*}
Since $D(\bar{z}_{k})\geq \frac{\sigma}{\lambda}\norm{\bar{z}_{k}-x_k}^2>0$ and $z_i\to \bar{z}_{k}$, there exists $N_{k}\in\Natural$ such that
\[(\forall i\geq N_k)\;\; D(z_{i})\geq 0.\]
Expanding the expression of $D(z_i)$ yields
\begin{equation*}
(\forall i\geq N_k)\;\;g(x_k)-g(z_{i})-\innerp{u_k}{x_k-z_{i}}\geq \frac{(1-\sigma)}{\lambda}\norm{z_{i}-x_k}^2,
\end{equation*}
which completes the proof.
\end{proof}

\begin{proposition}[Achievability of (\ref{eq:constraint:xik})]\label{pp:constraint:xi:finite}
Suppose that $x_k$ is not a solution of (PP), then (\ref{eq:constraint:xik}) remains satisfied after finite iterations of $T_g(x_k,u_k,\lambda_k)$, i.e., there exists $ N_k\in \braces{-1}\cup \Natural$ such that
\begin{equation*}
(\forall i\geq N_k)\;\;\dist(u_k,\partial_{\zeta_k}g(z_{i}))\leq \theta\norm{z_{i}-x_k}.
\end{equation*}
\end{proposition}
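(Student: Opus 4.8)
The plan is to combine the upper semicontinuity established in Theorem~\ref{thm:upper:semicont} with the strict parameter inequality $\theta>1/\lambda$ so as to open a fixed gap between the two sides of (\ref{eq:constraint:xik}) in the limit $i\to+\infty$, and then to transfer that gap to all sufficiently large $i$.

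First I would invoke Assumption~\ref{assumpt:solution:steptwo}. Since $x_k$ is not a solution of (PP), there is a point $\bar{z}_k\in\dom{g}$ with $\bar{z}_k\neq x_k$, $z_i\to\bar{z}_k$, and (recalling $\lambda_k=\lambda$) the inclusion $\bar{v}_k:=u_k-\frac{1}{\lambda}(\bar{z}_k-x_k)\in\partial g(\bar{z}_k)$. Because $\zeta_k>0$ forces $\partial g(\bar{z}_k)\subseteq\partial_{\zeta_k}g(\bar{z}_k)$, the vector $\bar{v}_k$ also lies in $\partial_{\zeta_k}g(\bar{z}_k)$, which gives the bound
\[\dist\parentheses{u_k,\partial_{\zeta_k}g(\bar{z}_k)}\leq\norm{u_k-\bar{v}_k}=\frac{1}{\lambda}\norm{\bar{z}_k-x_k}.\]

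Next I would apply Theorem~\ref{thm:upper:semicont} to the sequence $z_i\to\bar{z}_k$ with $u=u_k$, obtaining
\[\limsup_{i\to+\infty}\dist\parentheses{u_k,\partial_{\zeta_k}g(z_i)}\leq\dist\parentheses{u_k,\partial_{\zeta_k}g(\bar{z}_k)}\leq\frac{1}{\lambda}\norm{\bar{z}_k-x_k}.\]
On the other hand, $z_i\to\bar{z}_k$ gives $\theta\norm{z_i-x_k}\to\theta\norm{\bar{z}_k-x_k}$, and since $\bar{z}_k\neq x_k$ we have $\norm{\bar{z}_k-x_k}>0$; combined with $\theta>1/\lambda$ this yields the strict separation $\theta\norm{\bar{z}_k-x_k}>\frac{1}{\lambda}\norm{\bar{z}_k-x_k}$. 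Writing $L$ for the limsup above and $R:=\theta\norm{\bar{z}_k-x_k}$, we thus have $R>L\geq 0$. Choosing the tolerance $(R-L)/2$ in the definition of limsup and in the limit of $\theta\norm{z_i-x_k}$ then produces an index $N_k\in\Natural$ beyond which $\dist(u_k,\partial_{\zeta_k}g(z_i))<(L+R)/2<\theta\norm{z_i-x_k}$, which is exactly (\ref{eq:constraint:xik}).

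The crux of the argument, and precisely the place where the prior arts break down, is the appeal to Theorem~\ref{thm:upper:semicont}. For the ordinary subdifferential the corresponding bound is false (cf.\ Example~\ref{exp:discontinuity:subdifferential}), so $\limsup_i\dist(u_k,\partial g(z_i))$ need not be controlled by any quantity that scales with $\norm{\bar{z}_k-x_k}$, and no fixed gap against $\theta\norm{z_i-x_k}$ can be guaranteed; it is exactly the ``continuity'' granted by the relaxation from $\partial g$ to $\partial_{\zeta_k}g$ that makes the separation $R>L$ available. The remainder is routine bookkeeping with $\limsup$ and the triangle-type estimate already embodied in (\ref{eq:upper:cont:a}).
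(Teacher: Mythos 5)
Your argument is correct and is essentially the paper's own proof: both hinge on Assumption~\ref{assumpt:solution:steptwo} to produce $\bar{z}_k\neq x_k$ with $u_k-\frac{1}{\lambda}(\bar{z}_k-x_k)\in\partial g(\bar{z}_k)\subset\partial_{\zeta_k}g(\bar{z}_k)$, then use the upper semicontinuity of $\dist(u_k,\partial_{\zeta_k}g(\cdot))$ (Theorem~\ref{thm:upper:semicont}) together with $\theta>1/\lambda$ to force a strict gap in the limit. The only cosmetic difference is that the paper bundles the two sides into a single upper semicontinuous function $D(z)=\dist(u_k,\partial_{\zeta_k}g(z))-\theta\norm{z-x_k}$ with $D(\bar{z}_k)<0$, whereas you carry out the equivalent $\limsup$ bookkeeping by hand.
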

\begin{proof}
From Assumption \ref{assumpt:solution:steptwo}, $0\not\in \tilde{f}_k(x_k)$ and $\lambda_k=\lambda$, there exists $\bar{z}_{k}(\neq x_k)\in \dom{g}$ such that $u_k-\frac{1}{\lambda}(\bar{z}_{k}-x_k)\in\partial g(\bar{z}_{k})$.
Consider the following function over $\dom{g}$:
\begin{equation*}
D(z)\coloneqq \dist\parentheses{u_k,\partial_{\zeta_k}g(z)}-\theta\norm{z-x_k}.
\end{equation*}
Since $u_k-\frac{1}{\lambda}(\bar{z}_{k}-x_k)\in\partial g(\bar{z}_{k})\subset\partial_{\zeta_k}g(\bar{z}_{k})$, we have
\[\dist\parentheses{u_k,\partial_{\zeta_k}g(\bar{z}_{k})}\leq \frac{1}{\lambda}\norm{\bar{z}_{k}-x_k}\]
thus
\begin{equation*}
D(\bar{z}_{k})\leq \parentheses{\frac{1}{\lambda}-\theta}\norm{\bar{z}_{k}-x_k}<0.
\end{equation*}
Since $z_i\to\bar{z}_{k}$, the upper semicontinuity of $D(z)$ (Theorem \ref{thm:upper:semicont}) then yields
\begin{equation*}
\limsup_{i\to+\infty}D(z_i)\leq D(\bar{z}_{k})<0.
\end{equation*}
Therefore, there exists $N_k\in\Natural$ such that
\begin{equation*}
(\forall i\geq N_k)\;\; D(z_i)\leq 0.
\end{equation*}
Expanding the expression of $D(z_i)$ yields
\begin{equation*}
(\forall i\geq N_k)\;\; \dist\parentheses{u_k,\partial_{\zeta_k}g(z_i)}\leq\theta\norm{z_i-x_k},
\end{equation*}
which completes the proof.
\end{proof}

Applying the preceding two propositions, we can prove the finite inner loop termination of tPLDCA.

\begin{theorem}[Inner loop termination of tPLDCA]\label{thm:finite:termination}
The step 2 of tPLDCA terminates within finite inner iterations, i.e., there exists $N_k\in \Natural$ such that $z_{N_k}$ satisfies simultaneously
\begin{equation}\label{eq:decrease:g:finite}
g(x_k)-g(z_{N_k})-\innerp{u_k}{x_k-z_{N_k}}\geq \frac{(1-\sigma)}{\lambda}\norm{z_{N_k}-x_k}^2
\end{equation}
and
\begin{equation}\label{eq:constraint:xi:finite}
\dist(u_k,\partial_{\zeta_k}g(z_{N_k}))\leq \theta\norm{z_{N_k}-x_k}.
\end{equation}
\end{theorem}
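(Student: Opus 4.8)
The plan is to reduce Theorem~\ref{thm:finite:termination} to the two achievability results already established, namely Proposition~\ref{pp:decrease:g:finite} (achievability of (\ref{eq:decrease:g})) and Proposition~\ref{pp:constraint:xi:finite} (achievability of (\ref{eq:constraint:xik})). Both propositions assert that, under the hypothesis that $x_k$ is not a solution of (PP), there is a tail index beyond which the respective condition holds for \emph{all} inner iterates $z_i$. The essential observation is that these two tails can be combined: if $N_k^{(1)}$ is the threshold from Proposition~\ref{pp:decrease:g:finite} and $N_k^{(2)}$ is the threshold from Proposition~\ref{pp:constraint:xi:finite}, then for every $i\geq N_k\coloneqq\max\{N_k^{(1)},N_k^{(2)}\}$ both (\ref{eq:decrease:g}) and (\ref{eq:constraint:xik}) hold simultaneously, so $z_{N_k}$ is the desired terminating iterate satisfying (\ref{eq:decrease:g:finite}) and (\ref{eq:constraint:xi:finite}).

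The only remaining subtlety is to handle the case excluded by the hypotheses of the two propositions, that is, the degenerate case where $x_k$ \emph{is} a solution of (PP). First I would dispose of this case directly. If $x_k$ solves (PP), then by Assumption~\ref{assumpt:solution:steptwo} we have $0\in\partial\tilde{f}_k(x_k)$, which rearranges to $u_k\in\partial g(x_k)\subset\partial_{\zeta_k}g(x_k)$. Now recall that the inner loop of step~2 initializes with $z_{-1}=x_k$. Evaluating both conditions at the index $i=-1$: condition (\ref{eq:decrease:g}) reads $0\geq 0$ (both sides vanish since $z_{-1}=x_k$), and condition (\ref{eq:constraint:xik}) reads $\dist(u_k,\partial_{\zeta_k}g(x_k))\leq 0$, which holds because $u_k\in\partial_{\zeta_k}g(x_k)$ forces the distance to be zero. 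Hence in the degenerate case the loop terminates immediately at $i=-1$, and we may take $N_k=-1$, consistent with the index set $\{-1\}\cup\Natural$ permitted in step~2. This is why the statement of Proposition~\ref{pp:constraint:xi:finite} allows $N_k\in\{-1\}\cup\Natural$ whereas Proposition~\ref{pp:decrease:g:finite} restricts to $\Natural$.

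I do not expect any genuine obstacle here: the theorem is essentially a bookkeeping combination of the two preceding propositions together with a one-line check of the degenerate initialization. The real analytical work---the appeal to the upper semicontinuity of $\dist(u_k,\partial_{\zeta_k}g(\cdot))$ from Theorem~\ref{thm:upper:semicont} to control (\ref{eq:constraint:xik}), and the continuity of the descent gap to control (\ref{eq:decrease:g})---has already been carried out inside those propositions. If I were to point to the one place requiring a moment of care, it would be ensuring that the final stated $N_k$ lands in $\Natural$ rather than $\{-1\}\cup\Natural$ in the nondegenerate branch; this is automatic because Proposition~\ref{pp:decrease:g:finite} produces a threshold in $\Natural$, so the maximum of the two thresholds is in $\Natural$, matching the conclusion of the theorem. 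Thus the proof splits cleanly into the degenerate case (terminate at $z_{-1}$) and the nondegenerate case (take the larger of the two thresholds), and in both cases a valid terminating index exists.
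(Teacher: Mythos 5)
Your proposal is correct and follows essentially the same route as the paper: split on whether $x_k$ already solves (PP) (equivalently, whether $\bar z_k=x_k$, by strong convexity of $\tilde f_k$), dispose of the degenerate case via the initialization $z_{-1}=x_k$ together with $u_k\in\partial g(x_k)\subset\partial_{\zeta_k}g(x_k)$, and otherwise take the maximum of the thresholds from Propositions~\ref{pp:decrease:g:finite} and~\ref{pp:constraint:xi:finite}. The only cosmetic discrepancy (your $N_k=-1$ in the degenerate branch versus the theorem's $N_k\in\Natural$) is present in the paper's own proof as well and does not affect the substance.
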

\begin{proof}
From Assumption \ref{assumpt:solution:steptwo} and $\lambda_k=\lambda$, there exists $\bar{z}_{k}\in\dom{g}$ such that 
\[u_k-\frac{1}{\lambda}(\bar{z}_{k}-x_k)\in\partial g(\bar{z}_{k}).\]
\begin{enumerate}
\item[(I)] If $\bar{z}_{k}=x_k$, then since $z_{-1}=x_k$, we have
\begin{equation*}
g(x_k)-g(z_{-1})-\innerp{u_k}{x_k-z_{-1}}=0\geq \frac{(1-\sigma)}{\lambda}\norm{z_{-1}-x_k}^2=0.
\end{equation*}
In addition, since $z_{-1}=x_k=\bar{z}_{k}$, we have $u_k\in \partial g(z_{-1})\subset\partial_{\zeta_k}g(z_{-1})$, thus
\begin{equation*}
\dist\parentheses{u_k,\partial_{\zeta_k}g(z_{-1})}=0\leq \theta\norm{z_{-1}-x_k}^2=0.
\end{equation*}
Therefore, $N_k=0$ satisfies (\ref{eq:decrease:g:finite}) and (\ref{eq:constraint:xi:finite}).

\item[(II)] If $\bar{z}_{k}\neq x_k$. Then from Proposition \ref{pp:decrease:g:finite}, there exists $N_{1,k}\in\Natural$ such that
\begin{equation*}
(\forall i\geq N_{1,k})\;\; g(x_k)-g(z_i)-\innerp{u_k}{x_k-z_i}\geq \frac{(1-\sigma)}{\lambda}\norm{z_i-x_k}.
\end{equation*}
From Proposition \ref{pp:constraint:xi:finite}, there exists $N_{2,k}\in\Natural$ such that
\begin{equation*}
(\forall i\geq N_{2,k})\;\; \dist(u_k,\partial_{\zeta_k}g(z_i))\leq \theta \norm{z_i-x_k}.
\end{equation*}
Therefore, $N_k\coloneqq \max\{N_{1,k},N_{2,k}\}$ satisfies (\ref{eq:decrease:g:finite}) and (\ref{eq:constraint:xi:finite}).
\end{enumerate}
Combining (I) and (II) completes the proof.
\end{proof}

\subsection{Convergence Properties}
The proposed tPLDCA introduces relaxations in both step 1 and 2 to permit various calculation errors. In this section, we show that despite such relaxations, tPLDCA enjoys the same convergence properties as that of exact DCA \cite{pham1997} and PLDCA \cite{sun2003}.

First, we prove the convergence of functional values of the iterates of tPLDCA.
\begin{theorem}[Convergence of functional values of iterates]\label{thm:convergence:functional}
The sequence $(x_k)_{k\in\Natural}$ generated by tPLDCA satisfies:
\begin{enumerate}
\item[(a)] $\parentheses{f(x_k)+(\frac{\gamma}{2}+\rho)\norm{x_k-x_{k-1}}^2}_{k\in\Natural}$ is decreasing.

\item[(b)] $\lim_{k\to +\infty}\norm{x_{k+1}-x_k}=0$.

\item[(c)] $(f(x_k))_{k\in\Natural}$ is convergent.
\end{enumerate}
\end{theorem}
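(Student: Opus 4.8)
The plan is to fold all three claims into a single Lyapunov-type descent inequality for the sequence
\begin{equation*}
a_k\coloneqq f(x_k)+\parentheses{\tfrac{\gamma}{2}+\rho}\norm{x_k-x_{k-1}}^2.
\end{equation*}
First I would bound $g(x_{k+1})$ from above by setting $z_i=x_{k+1}$ in the accepted inner condition (\ref{eq:decrease:g}) and rearranging, which gives
\begin{equation*}
g(x_{k+1})\leq g(x_k)+\innerp{u_k}{x_{k+1}-x_k}-\frac{1-\sigma}{\lambda}\norm{x_{k+1}-x_k}^2.
\end{equation*}

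Next I would control $h$ through Step 1. Since $\partial_{\epsilon_k}h(x_k)$ is nonempty, closed and convex (Proposition \ref{pp:convexity:esubdiff}) and $\dist(u_k,\partial_{\epsilon_k}h(x_k))\leq d_k$, the distance is attained at some $\hat{u}_k\in\partial_{\epsilon_k}h(x_k)$ with $\norm{u_k-\hat{u}_k}\leq d_k$. The defining inequality of an $\epsilon_k$-subgradient (Definition \ref{def:esubdiff}) evaluated at $y=x_{k+1}$ then yields $h(x_{k+1})\geq h(x_k)+\innerp{\hat{u}_k}{x_{k+1}-x_k}-\epsilon_k$. Subtracting this from the bound on $g(x_{k+1})$ and recalling $f=g-h$ produces
\begin{equation*}
f(x_{k+1})\leq f(x_k)+\innerp{u_k-\hat{u}_k}{x_{k+1}-x_k}-\frac{1-\sigma}{\lambda}\norm{x_{k+1}-x_k}^2+\epsilon_k.
\end{equation*}
To the cross term I would apply Cauchy--Schwarz followed by Young's inequality $ab\leq\frac{1}{2}(a^2+b^2)$, and then substitute $d_k=\gamma\norm{x_k-x_{k-1}}$ and $\epsilon_k=\rho\norm{x_k-x_{k-1}}^2$. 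After collecting the $\norm{x_k-x_{k-1}}^2$ terms into $a_k$ this becomes the master inequality
\begin{equation*}
a_{k+1}\leq a_k-\parentheses{\frac{1-\sigma}{\lambda}-\gamma-\rho}\norm{x_{k+1}-x_k}^2.
\end{equation*}

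The crux --- and the only place where the precise admissible parameter ranges enter --- is to verify that the coefficient $c\coloneqq\frac{1-\sigma}{\lambda}-\gamma-\rho$ is nonnegative (in fact strictly positive). This follows directly from the standing assumption $\gamma\in\big[0,\frac{1-\sigma}{\lambda}-\rho\big)$, which forces $\gamma+\rho<\frac{1-\sigma}{\lambda}$. Claim (a) is then immediate, since $c\geq 0$ makes $(a_k)_{k\in\Natural}$ nonincreasing.

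For the remaining two claims I would exploit the telescoping structure of the master inequality. Because $a_k\geq f(x_k)\geq\inf_{x}f(x)>-\infty$ by Assumption \ref{assumpt:inff}, the nonincreasing sequence $(a_k)_{k\in\Natural}$ is bounded below and hence converges to some $a_*$. Summing the master inequality over $k$ gives $c\sum_{k}\norm{x_{k+1}-x_k}^2\leq a_0-a_*<+\infty$; since $c>0$, the series converges and therefore $\norm{x_{k+1}-x_k}\to 0$, which is claim (b). Finally, writing $f(x_k)=a_k-\parentheses{\tfrac{\gamma}{2}+\rho}\norm{x_k-x_{k-1}}^2$ and letting $k\to+\infty$, the first term tends to $a_*$ and the second to $0$ by (b), so $f(x_k)\to a_*$, establishing claim (c). I expect the main obstacle to be essentially bookkeeping: reaching the master inequality with the correct sign of $c$, after which (a)--(c) are routine consequences of monotone convergence and summability.
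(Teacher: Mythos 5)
Your proposal is correct and follows essentially the same route as the paper's own proof: the same combination of the $\epsilon_k$-subgradient inequality at a nearest point $\hat{u}_k\in\partial_{\epsilon_k}h(x_k)$ with the descent condition (\ref{eq:decrease:g}), the same Cauchy--Schwarz/Young treatment of the cross term, and the same Lyapunov sequence $a_k$ with coefficient $\eta=(1-\sigma)/\lambda-\gamma-\rho>0$, followed by telescoping and the boundedness from below of $f$. No substantive differences to report.
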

\begin{proof}
(a) Since $\dist(u_k,\partial_{\epsilon_k}h(x_k))\leq d_k$ implies the existence of $\bar{u}_k\in\partial_{\epsilon_k}h(x_k)$ satisfying $\norm{\bar{u}_k-u_k}\leq d_k$, we have
\begin{equation*}
h(x_{k+1})\geq h(x_k)+\innerp{\bar{u}_k}{x_{k+1}-x_k}-\epsilon_k,
\end{equation*}
which can be rewritten as
\begin{equation*}
h(x_{k+1})\geq h(x_k)+\innerp{u_k}{x_{k+1}-x_k}+\innerp{\bar{u}_k-u_k}{x_{k+1}-x_k}-\epsilon_k.
\end{equation*}
Combining the inequality above with
\begin{equation*}
\innerp{\bar{u}_k-u_k}{x_{k+1}-x_k}\geq -\norm{\bar{u}_k-u_k}\norm{x_{k+1}-x_k}\geq -d_k\norm{x_{k+1}-x_k}
\end{equation*}
yields
\begin{equation}\label{eq:decrease:h}
h(x_{k+1})\geq h(x_k)+\innerp{u_k}{x_{k+1}-x_k}-d_k\norm{x_{k+1}-x_k}-\epsilon_k.
\end{equation}

On the other hand, summing (\ref{eq:decrease:h}) with (\ref{eq:decrease:g}) yields
\begin{equation}\label{eq:decrease:f}
f(x_k)\geq f(x_{k+1})-d_k\norm{x_{k+1}-x_k}-\epsilon_k+\frac{(1-\sigma)}{\lambda}\norm{x_{k+1}-x_k}^2.
\end{equation}
Considering that $d_k=\gamma\norm{x_k-x_{k-1}}$, we have
\begin{equation*}
d_k\norm{x_{k+1}-x_k}=\gamma\norm{x_k-x_{k-1}}\norm{x_{k+1}-x_k}\leq \frac{\gamma}{2}\norm{x_k-x_{k-1}}^2+\frac{\gamma}{2}\norm{x_{k+1}-x_k}^2.
\end{equation*}
Substituting the inequality above and $\epsilon_k=\rho\norm{x_k-x_{k-1}}^2$ into (\ref{eq:decrease:f}) yields
\begin{equation*}
f(x_k)\geq f(x_{k+1})+\parentheses{-\frac{\gamma}{2}+\frac{(1-\sigma)}{\lambda}}\norm{x_{k+1}-x_k}^2-\parentheses{\frac{\gamma}{2}+\rho}\norm{x_k-x_{k-1}}^2,
\end{equation*}
which can be transformed into
\begin{equation}\label{eq:decrease:fplusnorm}
f(x_k)+\parentheses{\frac{\gamma}{2}+\rho}\norm{x_k-x_{k-1}}^2\geq f(x_{k+1})+\parentheses{\frac{\gamma}{2}+\rho}\norm{x_{k+1}-x_k}^2+\eta\norm{x_{k+1}-x_k}^2,
\end{equation} 
where $\eta=(1-\sigma)/\lambda-\gamma-\rho$. Since $\gamma\in[0,(1-\sigma)/\lambda-\rho)$, we conclude
\begin{equation*}
f(x_k)+\parentheses{\frac{\gamma}{2}+\rho}\norm{x_k-x_{k-1}}^2\geq f(x_{k+1})+\parentheses{\frac{\gamma}{2}+\rho}\norm{x_{k+1}-x_k}^2,
\end{equation*}
which completes the proof of (a).
\bigskip

\noindent
(b) By summing up (\ref{eq:decrease:fplusnorm}) we obtain
\begin{equation*}
f(x_0)-f(x_{k+1})\geq -\parentheses{\frac{\gamma}{2}+\rho}\norm{x_{0}-x_{-1}}^2+\parentheses{\frac{\gamma}{2}+\rho}\norm{x_{k+1}-x_k}^2+\eta\sum_{i=0}^k \norm{x_{i+1}-x_i}^2.
\end{equation*}
Therefore, it can be derived that
\begin{equation*}
\eta\sum_{i=0}^{+\infty} \norm{x_{i+1}-x_i}^2\leq f(x_0)-\inf_{x\in\Real^n}\braces{f(x)}+\parentheses{\frac{\gamma}{2}+\rho}\norm{x_0-x_{-1}}^2<+\infty,
\end{equation*}
which implies $\lim_{k\to +\infty}\norm{x_{k+1}-x_k}=0$.
\bigskip

\noindent
(c) Combining (a) and 
\begin{equation*}
f(x_k)+\parentheses{\frac{\gamma}{2}+\rho}\norm{x_k-x_{k-1}}^2\geq \inf_{x\in\Real^n}\braces{f(x)}+0>-\infty,
\end{equation*}
we yield that $\parentheses{f(x_k)+\parentheses{\frac{\gamma}{2}+\rho}\norm{x_k-x_{k-1}}^2}_{k\in\Natural}$ is decreasing and lower bounded, thus is convergent. Suppose that
\begin{equation*}
\lim_{k\to +\infty} \braces{f(x_k)+\parentheses{\frac{\gamma}{2}+\rho}\norm{x_k-x_{k-1}}^2}=\alpha,
\end{equation*}
then since $\lim_{k\to +\infty}\norm{x_k-x_{k-1}}=0$ from (b), we obtain that 
\[\lim_{k\to +\infty} f(x_k)=\alpha,\]
thus $\parentheses{f(x_k)}_{k\in\Natural}$ is convergent.
\end{proof}

Next, we prove that every cluster point (if exists) of the iterates generated by tPLDCA is a critical point of $f$, which is a celebrated convergence guarantee that emerged in the classical studies of DC-programming \cite{pham1997,le2005}.
\begin{theorem}[Convergence to a critical point]\label{thm:convergence:point}
Suppose that $(x_k)_{k\in\Natural}$ is a sequence generated by tPLDCA and $\bar{x}$ is a cluster point of it. Then the following hold:
\begin{enumerate}
\item[(a)] $\lim_{k\to +\infty} f(x_k)=f(\bar{x})$.

\item[(b)] $\bar{x}$ is a critical point of $f$, i.e., $\partial g(\bar{x})\cap \partial h(\bar{x})\neq \emptyset$.
\end{enumerate}
\end{theorem}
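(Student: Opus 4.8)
The plan is to combine the continuity statements of Lemma~\ref{lemma:contghf}, the convergence properties of Theorem~\ref{thm:convergence:functional}, and a limiting argument applied to the subgradient and $\epsilon$-subgradient inequalities produced by the two steps of tPLDCA. For part (a), I would fix a subsequence $(x_{k_j})_{j\in\Natural}$ with $x_{k_j}\to\bar x$ realizing the cluster point. Since every iterate lies in $\dom f=C$ (Lemma~\ref{lemma:domf}) and $C$ is closed, $\bar x\in\dom f$, so the relative continuity of $f$ on $\dom f$ (Lemma~\ref{lemma:contghf}) gives $f(x_{k_j})\to f(\bar x)$. By Theorem~\ref{thm:convergence:functional}(c) the whole sequence $(f(x_k))_{k\in\Natural}$ converges, say to $\alpha$; matching the limits along the subsequence forces $\alpha=f(\bar x)$, proving (a).

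For part (b) I would first collect the vanishing quantities: by Theorem~\ref{thm:convergence:functional}(b), $\norm{x_{k+1}-x_k}\to 0$, hence $d_k=\gamma\norm{x_k-x_{k-1}}\to 0$, $\epsilon_k=\rho\norm{x_k-x_{k-1}}^2\to 0$, $\zeta_k\to 0$ by hypothesis, and $x_{k_j+1}\to\bar x$ since $\norm{x_{k_j+1}-x_{k_j}}\to 0$. The key preparatory step, which I expect to be the main obstacle, is to extract a convergent subsequence of the subgradient candidates $(u_{k_j})$. By (\ref{eq:constraint:uk}) and Proposition~\ref{pp:convexity:esubdiff} there is $\bar u_{k_j}\in\partial_{\epsilon_{k_j}}h(x_{k_j})$ (attained by projection) with $\norm{\bar u_{k_j}-u_{k_j}}\le d_{k_j}$. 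Since $\epsilon_{k_j}\to 0$, eventually $\epsilon_{k_j}\le 1$, so the monotonicity of the $\epsilon$-subdifferential in $\epsilon$ (immediate from Definition~\ref{def:esubdiff}) gives $\bar u_{k_j}\in\partial_{1}h(x_{k_j})$; as $\{x_{k_j}\}$ is bounded, Lemma~\ref{lemma:bounded:partialhS} makes $(\bar u_{k_j})$ bounded, and $\norm{\bar u_{k_j}-u_{k_j}}\le d_{k_j}\to 0$ makes $(u_{k_j})$ bounded as well. Passing to a further subsequence (not relabelled), I may assume $u_{k_j}\to\bar u$ for some $\bar u\in\Real^n$, whence also $\bar u_{k_j}\to\bar u$. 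This reduction of the varying levels $\epsilon_{k_j}$ to a single fixed level so as to invoke Lemma~\ref{lemma:bounded:partialhS} is the delicate point; once $\bar u$ exists, the rest is routine.

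It then remains to show $\bar u\in\partial h(\bar x)\cap\partial g(\bar x)$. For $h$: expanding $\bar u_{k_j}\in\partial_{\epsilon_{k_j}}h(x_{k_j})$ gives $h(y)\ge h(x_{k_j})+\innerp{\bar u_{k_j}}{y-x_{k_j}}-\epsilon_{k_j}$ for every $y\in\Real^n$, and letting $j\to\infty$ with $h(x_{k_j})\to h(\bar x)$ (continuity of $h$ on $\Real^n$, Lemma~\ref{lemma:contghf}), $\bar u_{k_j}\to\bar u$, $x_{k_j}\to\bar x$ and $\epsilon_{k_j}\to 0$ yields $h(y)\ge h(\bar x)+\innerp{\bar u}{y-\bar x}$, i.e.\ $\bar u\in\partial h(\bar x)$. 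For $g$: by (\ref{eq:constraint:xik}) at the accepted iterate there is $w_{k_j}\in\partial_{\zeta_{k_j}}g(x_{k_j+1})$ with $\norm{w_{k_j}-u_{k_j}}\le\theta\norm{x_{k_j+1}-x_{k_j}}\to 0$, so $w_{k_j}\to\bar u$; the $\zeta_{k_j}$-subgradient inequality $g(y)\ge g(x_{k_j+1})+\innerp{w_{k_j}}{y-x_{k_j+1}}-\zeta_{k_j}$ passes to the limit using $x_{k_j+1}\to\bar x$, the relative continuity of $g$ on $\dom g=C$ (Lemma~\ref{lemma:contghf}, valid since $x_{k_j+1},\bar x\in C$), $w_{k_j}\to\bar u$ and $\zeta_{k_j}\to 0$, giving $\bar u\in\partial g(\bar x)$. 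Hence $\bar u\in\partial g(\bar x)\cap\partial h(\bar x)\neq\emptyset$, so $\bar x$ is a critical point of $f$, completing (b).
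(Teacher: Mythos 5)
Your proposal is correct and follows essentially the same route as the paper's own proof: part (a) via closedness of $\dom f$ and relative continuity of $f$ combined with Theorem~\ref{thm:convergence:functional}(c), and part (b) via boundedness of $(u_{k_j})$ through Lemma~\ref{lemma:bounded:partialhS} (the paper uses a uniform bound $\bar\epsilon$ on $(\epsilon_k)$ where you use the fixed level $\epsilon=1$, an immaterial difference) followed by passing to the limit in the $\epsilon$- and $\zeta$-subgradient inequalities. No gaps.
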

\begin{proof}
(a) Since $\bar{x}$ is a cluster point of $(x_k)_{k\in\Natural}$, we can choose a subsequence $(x_{k_i})_{i\in\Natural}$ of $(x_k)_{k\in\Natural}$ satisfying $\lim_{i\to +\infty}x_{k_i}=\bar{x}$. From $x_0\in\dom{f}$, (\ref{eq:decrease:g}) and Lemma \ref{lemma:domf}, $x_{k}\in\dom{g}=\dom{f}$ for every $k\in\Natural$. Since $\dom{f}=C$ is closed (Lemma \ref{lemma:domf}), we have $\bar{x}\in\dom{f}$. The continuity of $f$ over $\dom{f}$ (Lemma \ref{lemma:contghf}) yields
\begin{equation*}
\lim_{i\to +\infty} f(x_{k_i})=f(\bar{x}).
\end{equation*}
The convergence of $\parentheses{f(x_k)}_{k\in\Natural}$ (Theorem \ref{thm:convergence:functional}(c)) implies
\begin{equation*}
\lim_{k\to +\infty} f(x_{k})=\lim_{i\to +\infty} f(x_{k_i})=f(\bar{x}),
\end{equation*}
which completes the proof of (a).
\bigskip

\noindent
(b) Let $(x_{k_i})_{i\in\Natural}$ be a subsequence of $(x_k)_{k\in\Natural}$ convergent to $\bar{x}$. Since $\lim_{k\to +\infty}\norm{x_{k+1}-x_k}=0$ (Theorem \ref{thm:convergence:functional}(b)), $(\epsilon_k)_{k\in\Natural}$ and $(d_k)_{k\in\Natural}$ are both bounded. Let $\bar{\epsilon}$ be an upper bound of $(\epsilon_k)_{k\in\Natural}$ and let $\bar{d}$ be an upper bound of $(d_k)_{k\in\Natural}$, then it can be verified that
\begin{equation*}
\parentheses{\forall i\in\Natural}\;\;\dist(u_{k_i},\partial_{\bar{\epsilon}}h(x_{k_i}))\leq \bar{d}.
\end{equation*}
Since $(x_{k_i})_{i\in\Natural}$ is bounded, 
\begin{equation*}
\partial_{\bar{\epsilon}}h((x_{k_i})_{i\in\Natural})\coloneqq \cup_{i\in\Natural} \partial_{\bar{\epsilon}}h(x_{k_i})
\end{equation*}
is bounded from Lemma \ref{lemma:bounded:partialhS}, thus $(u_{k_i})_{i\in\Natural}$ is also bounded. Therefore, by further passing to a subsequence, if necessary, we can assume the existence of some $\bar{u}\in\Real^n$ such that
\begin{equation*}
\lim_{i\to +\infty} x_{k_i} = \bar{x},\;\lim_{k\to +\infty} u_{k_i} = \bar{u}.
\end{equation*}
Moreover, for every $k\in\Natural$, by noting $\dist(u_k,\partial_{\zeta_k}g(x_{k+1}))\leq\theta\norm{x_{k+1}-x_k}$ implies the existence of $\xi_{k+1}\in\partial_{\zeta_k}g(x_{k+1})$ satisfying 
\begin{equation*}
\norm{\xi_{k+1}-u_{k}}\leq \theta\norm{x_{k+1}-x_k},
\end{equation*}
the property $\lim_{k\to +\infty} \norm{x_{k+1}-x_k}=0$ (by Theorem \ref{thm:convergence:functional}) ensures
\begin{align*}
\lim_{i\to +\infty} x_{k_i+1} &=\lim_{i\to +\infty}\braces{x_{k_i}+(x_{k_i+1}-x_{k_i})}=\lim_{i\to +\infty}x_{k_i}=\bar{x},\\
\lim_{i\to +\infty} \xi_{k_i+1} &=\lim_{i\to+\infty} \braces{u_{k_i}+(\xi_{k_i+1}-u_{k_i})}=\lim_{i\to +\infty} u_{k_i}=\bar{u}.
\end{align*}
In addition, since $\dist(u_{k},\partial_{\epsilon_{k}}h(x_{k}))\leq d_{k}$, we can obtain
\begin{equation*}
(\forall y\in\Real^n)\;\; h(y)\geq h(x_{k_i})+\innerp{u_{k_i}}{y-x_{k_i}}-d_{k_i}\norm{y-x_{k_i}}-\epsilon_{k_i}
\end{equation*}
through a similar derivation with (\ref{eq:decrease:h}). Since $\xi_{k+1}\in\partial_{\zeta_k} g(x_{k+1})$, 
\begin{equation*}
(\forall y\in\Real^n)\;\; g(y)\geq g(x_{k_i+1})+\innerp{\xi_{k_i+1}}{y-x_{k_i+1}}-\zeta_{k_i}.
\end{equation*}
Since $g,h$ are continuous over $\dom{f}$ (Lemma \ref{lemma:contghf}), taking $i\to +\infty$ in the two inequalities above yields
\begin{align*}
(\forall y\in\Real^n)\;\;h(y)\geq h(\bar{x})+\innerp{\bar{u}}{y-\bar{x}},\\
(\forall y\in\Real^n)\;\;g(y)\geq g(\bar{x})+\innerp{\bar{u}}{y-\bar{x}},
\end{align*}
which implies $\bar{u}\in\partial h(\bar{x})\cap \partial g(\bar{x})\neq\emptyset$. Therefore, $\bar{x}$ is a critical point of $g-h$.
\end{proof}
 It should be noted that in some recent studies, stronger convergence guarantees can be established if $f$ possesses some favourable properties (e.g., decomposition of $h$ as maximum of convex smooth functions \cite{pang2017}, KL property of some auxiliary function involving $f$ \cite{wen2018}). We believe that similar extensive analysis is possible for tPLDCA and leave it for future research.

\section{A Computational Friendly Surrogate of $\epsilon$-Subdifferential}
\label{sec:subdifferential}
A noticeable feature of tPLDCA is the use of $\epsilon$-subdifferential. However, from Definition \ref{def:esubdiff}, computing the whole set of $\epsilon$-subdifferential can be difficult in practice. In this section, we first analyse possible computational difficulties in tPLDCA regarding calculation of $\epsilon$-subdifferential. Then, in case such difficulties occur, we propose the notion of $\epsilon$-strict subdifferential under an additional assumption on $g$, and we show that $\epsilon$-strict subdifferential serves as a computational friendly surrogate of $\epsilon$-subdifferential. As will be shown in Sec. \ref{sec:implementation}, employing such surrogate leads to a pragmatic implementation of tPLDCA without losing all of its favourable properties.

\subsection{Computational Difficulties in tPLDCA regarding $\epsilon$-Subdifferential}
\label{subsec:implementability:CiPLDCA}

In tPLDCA, the step 1 finds an approximation of an element in $\partial_{\epsilon_k}h(x_k)$. Since this does not require computing the whole set of $\partial_{\epsilon_k}h(x_k)$, as long as we have an oracle that returns an $\epsilon$-subgradient of $h$ corrupted with mild noise, the step 1 is achievable. This applies to many scenarios encountered in practice. See the following remark.

\begin{remark}\label{remark:when_is_step1_implementable}
\begin{enumerate}
\item[(a)] If $\partial h(x_k)$ has a closed-form expression (e.g., $h$ is a simple differentiable function or a simple nonsmooth function such as the $l_1$-norm), then every $u_k\in\partial h(x_k)$ satisfies (\ref{eq:constraint:uk}).

\item[(b)] If $h$ can be approximated by a subdifferentiable function $\bar{h}\in\Gamma_0(\Real^n)$ with approximation error no larger than $\epsilon_k/2$, i.e.,
\begin{equation*}
(\forall x\in\Real^n)\;\; \lvert h(x)-\bar{h}(x) \rvert \leq \epsilon_k/2,
\end{equation*}
and $\partial \bar{h}(x_k)$ has a closed-form expression, then for every $u_k\in \partial \bar{h}(x_k)$, we have
\begin{equation*}
(\forall x\in\Real^n)\;\; h(x)\geq \bar{h}(x)-\epsilon_k/2\geq \bar{h}(x_k)+\innerp{u_k}{x-x_k}-\epsilon_k/2\geq h(x_k)-\epsilon_k+\innerp{u_k}{x-x_k},
\end{equation*}
which implies that $u_k\in\partial_{\epsilon_k}h(x_k)$, hence $u_k$ satisfies (\ref{eq:constraint:uk}).

\item[(c)] If $h$ can be constructed by conducting simple operations (e.g., composition with affine operator, taking sum/maximum) on a finite family of $\epsilon$-subdifferentiable functions $(\bar{h}_i(x))_{i=1}^K$, then one can obtain $u_k\in\partial_{\epsilon_k}h(x_k)$ using $\bar{u}_{k,i}\in\partial_{\epsilon_k}\bar{h}_i(x_k)$ and the calculus rules introduced in \cite[Ch. XI, Sec. 3]{hiriart1993}.
\end{enumerate}
\end{remark}

Therefore, the step 1 of tPLDCA does not pose a computational difficulty in general.

On the other hand, the step 2 of tPLDCA requires checking (\ref{eq:decrease:g}) and (\ref{eq:constraint:xik}) for every inner iterate $z_i$. (\ref{eq:decrease:g}) only involves computing the functional value of $g$, thus is implementable as long as $g(x_k)$ and $g(z_i)$ can be computed in closed-form. However, for given $u_k,z_i\in\Real^n$, computing the distance between $u_k$ and $\partial_{\zeta_k}g(z_i)$ must require the information of the whole set of $\partial_{\zeta_k}g(z_i)$. For very simple $g$, it is possible that $\dist(u_k,\partial_{\zeta_k}g(z_i))$ can be computed exactly. For example, if $g(x)\coloneqq \innerp{b}{x}+c$ with $b\in\Real^n$ and $c\in\Real$, then $\partial_{\zeta_k}g(z_i)=\braces{b}$ for every $\zeta_k>0$ and $z_i\in\Real^n$, hence $\dist(u_k,\partial_{\zeta_k}g(z_i))=\norm{u_k-b}$ is computable. However, in most cases, the computation of $\dist(u_k,\partial_{\zeta_k}g(z_i))$ is difficult, even for a simple quadratic function $g$. See the following example.

\begin{example}
Suppose that $g(x)\coloneqq \frac{1}{2}\innerp{Qx}{x}+\innerp{b}{x}$ with $Q\in\Real^{n\times n}$ being positive semidefinite and $b\in\Real^n$, then by \cite[Ch. XI, Example 1.2.2]{hiriart1993},
\begin{equation*}
\partial_{\zeta_k}g(z_i)=\braces{u\in\Real^n\mid \frac{1}{2}\innerp{u-b}{Q^{\dagger}(u-b)}-\innerp{z_i}{u}\leq \zeta_k-g(z_i)}\cap (b+\mathrm{Im}\,Q^{\dagger}),
\end{equation*}
where $Q^{\dagger}$ is the Moore-Penrose pseudo-inverse of $Q$, $\mathrm{Im}\,Q^{\dagger}$ is the image of $Q^{\dagger}$. Thus $\partial_{\zeta_k}g(z_i)$ is the intersection between an ellipsoid and an affine space, which implies that $\dist(u_k,\partial_{\zeta_k}g(z_i))$ cannot be computed exactly within finite iterations in general.
\end{example}

Therefore, the main computational difficulty in tPLDCA consists in its step 2, more precisely, in the computation of $\dist\parentheses{u_k,\partial_{\zeta_k}g(z_i)}$. 

\subsection{$\epsilon$-Strict Subdifferential: Definition and Inclusion Relation}
In order to resolve the difficulty of computing $\dist\parentheses{u_k,\partial_{\zeta_k}g(z_i)}$, in this section we put forward a novel notion of inexact subdifferential named $\epsilon$-strict subdifferential under the following additional assumption on $g$. 
\begin{assumption}\label{assumpt:decomp:g}
The function $g$ can be expressed as the pointwise maximum of finitely many convex smooth functions, i.e.,
\begin{equation}
g(x) = \max \{g_1(x),g_2(x),\dots, g_p(x)\},
\end{equation}
where $p\in\Natural$, every function in $\{g_1,\dots,g_p\}$ is smooth and convex.
\end{assumption}
\begin{remark}
Examples satisfying Assumption \ref{assumpt:decomp:g} include polyhedral convex functions, continuously differentiable convex functions, etc \cite{le2015,pang2017}.
\end{remark}

Under Assumption \ref{assumpt:decomp:g}, we formally define the $\epsilon$-strict subdifferential.

\begin{definition}[$\epsilon$-strict subdifferential]\label{def:estrict:subdiff}
For $\forall x\in\Real^n$ and $\epsilon\geq 0$, we define the $\epsilon$-strict subdifferential of $g$ at $x$ as
\begin{equation}
\hat{\partial}_\epsilon g(x) \coloneqq \convhull\braces{\nabla g_i(x)\mid i\in I_{g,\epsilon}(x)},
\end{equation}
where $I_{g,\epsilon}(x)\coloneqq \braces{i\in\{1,\dots,p\}\mid \; g_i(x)\geq g(x)-\epsilon}$, and $\convhull\braces{S}$ is the convex hull of a set $S$.
\end{definition}
\begin{remark}
It can be verified that $\hat{\partial}_0 g(x)$ coincides with $\partial g(x)$ \cite[Thm. 18.5 (Dubovitskii-Milyutin)]{bauschke2017}.
\end{remark}

Remarkably, the following inclusion relation holds among subdifferential, $\epsilon$-strict subdifferential and $\epsilon$-subdifferential.

\begin{proposition}[Inclusion relation]\label{pp:subset:hatpartial}
For every $ x\in\Real^n$ and $\epsilon>0$, the following inclusion holds: 
\[\partial g(x)\subset \hat{\partial}_{\epsilon}g(x)\subset \partial_{\epsilon}g(x).\]
\end{proposition}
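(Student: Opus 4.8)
The plan is to establish the two inclusions separately; both turn out to be elementary once the right tools are invoked. For the left inclusion $\partial g(x)\subset\hat{\partial}_{\epsilon}g(x)$, I would start from the identity recorded in the remark following Definition \ref{def:estrict:subdiff}, namely $\hat{\partial}_0 g(x)=\partial g(x)$ by the Dubovitskii--Milyutin theorem, so that $\partial g(x)=\convhull\braces{\nabla g_i(x)\mid i\in I_{g,0}(x)}$. Since $\epsilon>0$, any index $i$ active at $x$ (i.e. with $g_i(x)\geq g(x)$) trivially satisfies $g_i(x)\geq g(x)-\epsilon$, so $I_{g,0}(x)\subset I_{g,\epsilon}(x)$. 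The generating set of gradients therefore only grows, and monotonicity of the convex-hull operation yields $\partial g(x)=\hat{\partial}_0 g(x)\subset\hat{\partial}_{\epsilon}g(x)$.

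For the right inclusion $\hat{\partial}_{\epsilon}g(x)\subset\partial_{\epsilon}g(x)$, the key reduction is that $\partial_{\epsilon}g(x)$ is convex by Proposition \ref{pp:convexity:esubdiff}. Since $\hat{\partial}_{\epsilon}g(x)$ is by definition the convex hull of the finitely many generators $\nabla g_i(x)$ with $i\in I_{g,\epsilon}(x)$, it suffices to show each such generator lies in $\partial_{\epsilon}g(x)$; the convex hull of a subset of a convex set stays inside that set. Fixing $i\in I_{g,\epsilon}(x)$, I would combine the gradient inequality of the smooth convex function $g_i$, namely $g_i(y)\geq g_i(x)+\innerp{\nabla g_i(x)}{y-x}$ for all $y\in\Real^n$, with the two facts $g(y)\geq g_i(y)$ and $g_i(x)\geq g(x)-\epsilon$ (the latter being exactly the defining condition of $I_{g,\epsilon}(x)$). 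Chaining these gives $g(y)\geq g(x)+\innerp{\nabla g_i(x)}{y-x}-\epsilon$ for every $y$, which is precisely the $\epsilon$-subgradient inequality of Definition \ref{def:esubdiff}, so $\nabla g_i(x)\in\partial_{\epsilon}g(x)$.

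I do not expect any serious obstacle here: the entire argument is a short chain of elementary inequalities together with two structural facts. The only points requiring care are the two appeals that carry the argument, namely the convexity reduction in the second inclusion (which is where Proposition \ref{pp:convexity:esubdiff} is essential, letting one pass from the extreme generators to their convex hull) and the Dubovitskii--Milyutin identity $\hat{\partial}_0 g(x)=\partial g(x)$ in the first. Everything else reduces to the monotonicity of $I_{g,\epsilon}(x)$ in $\epsilon$ and the definition of $\epsilon$-subgradient.
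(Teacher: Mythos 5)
Your proposal is correct and follows essentially the same route as the paper: the first inclusion via the Dubovitskii--Milyutin identity and the monotonicity $I_{g,0}(x)\subset I_{g,\epsilon}(x)$, and the second via the gradient inequality of each $g_i$ combined with $g\geq g_i$ and $g_i(x)\geq g(x)-\epsilon$. The only cosmetic difference is that the paper takes the convex combination of the resulting inequalities explicitly, whereas you check the $\epsilon$-subgradient inequality at each generator $\nabla g_i(x)$ and then invoke the convexity of $\partial_{\epsilon}g(x)$ from Proposition \ref{pp:convexity:esubdiff}; these amount to the same computation.
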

\begin{proof}
We first show $\partial g(x)\subset \hat{\partial}_{\epsilon}g(x)$. By \cite[Thm. 18.5 (Dubovitskii-Milyutin)]{bauschke2017},
\begin{equation*}
\partial g(x)=\convhull\braces{\nabla g_i(x)\mid i\in I_{g,0}(x)},
\end{equation*}
hence $\partial g(x)\subset \hat{\partial}_{\epsilon}g(x)$ from $I_{g,0}(x)\subset I_{g,\epsilon}(x)$.

Next we show $\hat{\partial}_{\epsilon}g(x)\subset\partial_{\epsilon}g(x)$. From Definition \ref{def:estrict:subdiff}, for any $u\in\hat{\partial}_{\epsilon}g(x)$, there exists a set of nonnegative numbers $\braces{c_i\geq 0\mid i\in I_{g,\epsilon}(x)}$ such that
\begin{equation}\label{eq:estrict:inclusion:a}
u=\sum\nolimits_{i\in I_{g,\epsilon}(x)} c_i \nabla g_i(x),
\end{equation}
where
\begin{equation}\label{eq:estrict:inclusion:b}
\sum\nolimits_{i\in I_{g,\epsilon}(x)} c_i=1.
\end{equation}
From Assumption \ref{assumpt:decomp:g} and convexity of $g_i$, we have
\begin{equation*}
\parentheses{\forall i\in I_{g,\epsilon}(x),\forall y\in\Real^n}\;\;g(y)\geq g_i(y)\geq g_i(x)+\innerp{\nabla g_i(x)}{y-x}
\end{equation*}
Substituting 
\[\parentheses{\forall i\in I_{g,\epsilon}(x)}\;\;g_i(x)\geq g(x)-\epsilon\]
into the inequality above yields
\begin{equation*}
\parentheses{\forall i\in I_{g,\epsilon}(x),\forall y\in\Real^n}\;\;g(y)\geq g(x)-\epsilon+\innerp{\nabla g_i(x)}{y-x}.
\end{equation*}
Therefore, the following holds
\begin{align*}
\parentheses{\forall y\in\Real^n}\;\;\sum_{i\in I_{g,\epsilon}(x)} c_i g(y) \geq \sum_{i\in I_{g,\epsilon}(x)} c_i\brackets{ g(x)-\epsilon+\innerp{\nabla g_i(x)}{y-x}}.
\end{align*}
Substituting (\ref{eq:estrict:inclusion:a}) and (\ref{eq:estrict:inclusion:b}) into the inequality above yields
\begin{equation*}
\parentheses{\forall y\in\Real^n}\;\; g(y)\geq g(x)-\epsilon+\innerp{u}{y-x},
\end{equation*}
which implies $u\in\partial_{\epsilon}g(x)$. Hence $\hat{\partial}_{\epsilon}g(x)\subset\partial_{\epsilon}g(x)$.

Combining the discussion above completes the proof.
\end{proof}
\subsection{$\epsilon$-Strict Subdifferential: Favourable Properties}
In the following, we present two favourable properties of $\epsilon$-strict subdifferential. The first one shows that, $\epsilon$-strict subdifferential enjoys computational advantages over $\epsilon$-subdifferential.

\begin{remark}[Computability of projection onto $\epsilon$-strict subdifferential]\label{remark:computability:estrict}~

\noindent
For given $x\in\Real^n$ and $\epsilon>0$, the set $\hat{\partial}_{\epsilon}g(x)$ is defined as a convex polyhedron with known vertices $\nabla g_i(x)\; (i\in I_{g,\epsilon}(x))$. Therefore, the projection onto $\hat{\partial}_{\epsilon}g(x)$ can be computed via finitely terminating algorithms such as Wolfe's algorithm \cite{wolfe1976,gabidullina2018}. 
\end{remark}

Secondly, compared to subdifferential, $\epsilon$-strict subdifferential enjoys the same continuity property (Theorem \ref{thm:upper:semicont}) as $\epsilon$-subdifferential does. To see this, we first establish a counterpart of Proposition \ref{pp:continuity:dist:esubdiff} for $\epsilon$-strict subdifferential.

\begin{proposition}\label{pp:continuity:dist:estict}
Suppose that $\zeta>0$, $\bar{z}\in\Real^n$ and $\bar{v}\in\partial g(\bar{z})$. Then for any sequence $(z_i)_{i\in\Natural}$ convergent to $\bar{z}$, the following holds:
\begin{equation*}
\lim_{i\to+\infty}\dist\parentheses{\bar{v},\hat{\partial}_{\zeta}g(z_i)}=\dist\parentheses{\bar{v},\hat{\partial}_{\zeta}g(\bar{z})}=0.
\end{equation*}
\end{proposition}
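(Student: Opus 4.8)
The plan is to mirror the structure of Proposition \ref{pp:continuity:dist:esubdiff}, but with one genuine modification: since $\bar{v}$ need not land exactly inside $\hat{\partial}_{\zeta}g(z_i)$, instead of proving eventual membership I would exhibit an explicit element of $\hat{\partial}_{\zeta}g(z_i)$ that converges to $\bar{v}$. First, the right-hand equality is immediate: by Proposition \ref{pp:subset:hatpartial} we have $\bar{v}\in\partial g(\bar{z})\subset\hat{\partial}_{\zeta}g(\bar{z})$, so $\dist\parentheses{\bar{v},\hat{\partial}_{\zeta}g(\bar{z})}=0$. It thus remains to prove that the distances along the sequence vanish.

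To this end I would first represent $\bar{v}$ concretely. By the Dubovitskii--Milyutin formula \cite[Thm. 18.5]{bauschke2017}, $\partial g(\bar{z})=\convhull\braces{\nabla g_j(\bar{z})\mid j\in I_{g,0}(\bar{z})}$, so there exist nonnegative coefficients $(c_j)_{j\in I_{g,0}(\bar{z})}$ summing to $1$ with $\bar{v}=\sum_{j\in I_{g,0}(\bar{z})}c_j\nabla g_j(\bar{z})$. The crucial step is then to show that the active index set at $\bar{z}$ is eventually contained in the $\zeta$-active index set along the sequence, namely $I_{g,0}(\bar{z})\subset I_{g,\zeta}(z_i)$ for all large $i$. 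For each $j\in I_{g,0}(\bar{z})$ we have $g_j(\bar{z})=g(\bar{z})$; since every $g_j$ is smooth and $g=\max_j g_j$ is continuous, it follows that $g_j(z_i)-g(z_i)\to g_j(\bar{z})-g(\bar{z})=0$. Because $\zeta>0$ provides a strictly positive slack and $I_{g,0}(\bar{z})$ is finite, there is a single $N$ such that $g_j(z_i)\ge g(z_i)-\zeta$, hence $j\in I_{g,\zeta}(z_i)$, for every $j\in I_{g,0}(\bar{z})$ and every $i\ge N$.

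Finally I would set $w_i\coloneqq\sum_{j\in I_{g,0}(\bar{z})}c_j\nabla g_j(z_i)$. For $i\ge N$ this is a convex combination of the vertices $\nabla g_j(z_i)$ with $j\in I_{g,\zeta}(z_i)$, so $w_i\in\hat{\partial}_{\zeta}g(z_i)$ by Definition \ref{def:estrict:subdiff}. Smoothness of each $g_j$ makes $\nabla g_j$ continuous, whence $w_i\to\sum_{j}c_j\nabla g_j(\bar{z})=\bar{v}$, and therefore $\dist\parentheses{\bar{v},\hat{\partial}_{\zeta}g(z_i)}\le\norm{\bar{v}-w_i}\to 0$, giving the claimed limit.

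The main obstacle is the eventual inclusion $I_{g,0}(\bar{z})\subset I_{g,\zeta}(z_i)$; the strict positivity of $\zeta$ is exactly what buys the room needed here, and this is precisely where the argument departs from the ordinary subdifferential case, which is discontinuous because no such slack is available. A minor but essential point to keep clean is that $\bar{v}$ itself need not belong to $\hat{\partial}_{\zeta}g(z_i)$, so the convergence must be routed through the auxiliary point $w_i$ rather than argued by direct membership as in Proposition \ref{pp:continuity:dist:esubdiff}.
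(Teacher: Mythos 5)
Your proposal is correct and follows essentially the same route as the paper's proof: both establish the eventual inclusion $I_{g,0}(\bar{z})\subset I_{g,\zeta}(z_i)$ using the strictly positive slack $\zeta$, and then route the convergence through the auxiliary convex combination $w_i=\sum_j c_j\nabla g_j(z_i)$ (the paper's $d(z_i)$), which lies in $\hat{\partial}_{\zeta}g(z_i)$ and converges to $\bar{v}$ by continuity of the gradients. The only cosmetic difference is that you verify the inclusion via $g_j(z_i)-g(z_i)\to 0$ directly, whereas the paper works with the pairwise differences $s_{jl}=g_j-g_l$; both are valid.
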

\begin{proof}
(I) First, we show the existence of some $N\in\Natural$ satisfying
\begin{equation}\label{eq:continuity:estrict:a}
(\forall i\geq N)\;\;I_{g,0}(\bar{z})\subset I_{g,\zeta}(z_i).
\end{equation}
For $ j\in I_{g,0}(\bar{z})$ and $l\in\{1,\dots, p\}$, consider the following function
\begin{equation*}
s_{jl}(z)=g_j(z)-g_l(z).
\end{equation*}
Since $j\in I_{g,0}(\bar{z})$, $s_{jl}(\bar{z})\geq 0$. Since $s_{jl}(z)$ is continuous and $z_i\to \bar{z}$, there exists $N_{jl}\in\Natural$ such that
\begin{equation*}
\parentheses{\forall i\geq N_{jl}}\;\; s_{jl}(z_i)\geq -\zeta.
\end{equation*}
Expanding the expression of $s_{jl}(z_i)$ yields
\begin{equation*}
\parentheses{\forall i\geq N_{jl}}\;\; g_j(z_i)\geq g_l(z_i)-\zeta.
\end{equation*}
Let $N\coloneqq \max\braces{N_{jl}\mid j\in I_{g,0}(\bar{z}),l\in\braces{1,\dots,p}}$, then the following holds
\begin{equation*}
\parentheses{\forall i\geq N, \forall j\in I_{g,0}(\bar{z}),\forall l\in\braces{1,\dots, p}}\;\; g_j(z_i)\geq g_l(z_i)-\zeta.
\end{equation*}
The arbitrariness of $l\in\braces{1,\dots,p}$ yields that
\begin{equation*}
\parentheses{\forall i\geq N, \forall j\in I_{g,0}(\bar{z})}\;\; j\in I_{g,\zeta}(z_i),
\end{equation*}
which further implies
\begin{equation*}
\parentheses{\forall i\geq N}\;\;  I_{g,0}(\bar{z})\subset I_{g,\zeta}(z_i).
\end{equation*}

\noindent
(II) From \cite[Thm. 18.5 (Dubovitskii-Milyutin)]{bauschke2017}, for any $\bar{v}\in \partial g(\bar{z})$, there exists a set of nonnegative numbers $\braces{c_j\geq 0\mid j\in I_{g,0}(\bar{z})}$ such that
\begin{equation}\label{eq:continuity:estrict:b}
\bar{v}=\sum\nolimits_{j\in I_{g,0}(\bar{z})} c_j \nabla g_j(\bar{z}),
\end{equation}
where
\begin{equation}\label{eq:continuity:estrict:c}
\sum\nolimits_{j\in I_{g,0}(\bar{z})}c_j=1.
\end{equation}
Consider the following continuous vector-valued function:
\begin{equation*}
d(z)\coloneqq \sum\nolimits_{j\in I_{g,0}(\bar{z})} c_j\nabla g_j(z),
\end{equation*}
then (\ref{eq:continuity:estrict:a}) and (\ref{eq:continuity:estrict:c}) implies
\begin{equation*}
\parentheses{\forall i\geq N}\;\; d(z_i)=\sum\nolimits_{j\in \parentheses{I_{g,0}(\bar{z})\subset}I_{g,\zeta}(z_i)} c_j\nabla g_j(z_i)\in \hat{\partial}_{\zeta}g(z_i).
\end{equation*}
Therefore, the following holds
\begin{equation*}
\parentheses{\forall i\geq N}\;\; \dist\parentheses{\bar{v},\hat{\partial}_{\zeta}g(z_i)}\leq\norm{\bar{v}-d(z_i)}
\end{equation*}
Since $\bar{v}=d(\bar{z})$ from (\ref{eq:continuity:estrict:b}), taking the limit $i\to +\infty$ on both sides of the inequality above yields
\begin{equation*}
0\leq \lim_{i\to+\infty}\dist\parentheses{\bar{v},\hat{\partial}_{\zeta}g(z_i)}\leq \lim_{i\to +\infty} \norm{\bar{v}-d(z_i)}=\norm{\bar{v}-d(\bar{z})}=0,
\end{equation*}
which implies
\begin{equation*}
\lim_{i\to+\infty}\dist\parentheses{\bar{v},\hat{\partial}_{\zeta}g(z_i)}=\dist\parentheses{\bar{v},\hat{\partial}_{\zeta}g(\bar{z})}=0.
\end{equation*}
\end{proof}

Applying Proposition \ref{pp:continuity:dist:estict}, we can establish the following continuity property of $\epsilon$-strict subdifferential.

\begin{theorem}[Upper semicontinuity of $\dist\parentheses{u,\hat{\partial}_{\zeta}g(\cdot)}$]\label{thm:upper:semicont:estrict}
Suppose that $\zeta>0$, $u\in\Real^n$. Then $\dist\parentheses{u,\hat{\partial}_{\zeta}g(\cdot)}$ is upper semicontinuous at every $\bar{z}\in\Real^n$, i.e., for any sequence $(z_i)_{i\in\Natural}$ convergent to $\bar{z}\in\Real^n$, the following holds:
\begin{equation*}
\limsup_{i\to+\infty}\dist\parentheses{u,\hat{\partial}_{\zeta}g(z_i)}\leq \dist\parentheses{u,\hat{\partial}_{\zeta}g(\bar{z})}.
\end{equation*}
\end{theorem}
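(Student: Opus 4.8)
The plan is to mirror, essentially line for line, the proof of Theorem~\ref{thm:upper:semicont}, replacing the $\epsilon$-subdifferential by the $\epsilon$-strict subdifferential and invoking Proposition~\ref{pp:continuity:dist:estict} wherever Proposition~\ref{pp:continuity:dist:esubdiff} was used. Fix $\bar{z}\in\Real^n$ and a sequence $(z_i)_{i\in\Natural}$ converging to $\bar{z}$. The first thing I would check is that $\hat{\partial}_{\zeta}g(\bar{z})$ is a nonempty compact convex set: by Definition~\ref{def:estrict:subdiff} it is the convex hull of the finitely many gradients $\nabla g_i(\bar{z})$ indexed by $I_{g,\zeta}(\bar{z})$, and this index set is nonempty because any maximizing index satisfies $g_i(\bar{z})=g(\bar{z})\geq g(\bar{z})-\zeta$. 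Hence the projection theorem \cite[Thm.~3.16]{bauschke2017} applies to both $\hat{\partial}_{\zeta}g(\bar{z})$ and each $\hat{\partial}_{\zeta}g(z_i)$.

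With that in hand I would run the same three-point estimate as in Theorem~\ref{thm:upper:semicont}. Let $\bar{u}$ be the projection of $u$ onto $\hat{\partial}_{\zeta}g(\bar{z})$, so $\dist\parentheses{u,\hat{\partial}_{\zeta}g(\bar{z})}=\norm{u-\bar{u}}$, and for each $i$ let $u_i$ be the projection of $\bar{u}$ onto $\hat{\partial}_{\zeta}g(z_i)$. A single triangle inequality then yields
\[
\dist\parentheses{u,\hat{\partial}_{\zeta}g(z_i)}\leq\norm{u-u_i}\leq\norm{u-\bar{u}}+\norm{\bar{u}-u_i}=\dist\parentheses{u,\hat{\partial}_{\zeta}g(\bar{z})}+\dist\parentheses{\bar{u},\hat{\partial}_{\zeta}g(z_i)}.
\]
If I can show $\lim_{i\to+\infty}\dist\parentheses{\bar{u},\hat{\partial}_{\zeta}g(z_i)}=0$, then taking $\limsup_{i\to+\infty}$ on both sides gives exactly the asserted upper semicontinuity.

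The step I expect to be the main obstacle is precisely that last limit. The natural tool, Proposition~\ref{pp:continuity:dist:estict}, is stated only for base points lying in the \emph{exact} subdifferential $\partial g(\bar{z})$, whereas the projection $\bar{u}$ only lies in $\hat{\partial}_{\zeta}g(\bar{z})$, which by Proposition~\ref{pp:subset:hatpartial} is strictly larger. This gap is not cosmetic: the inclusion $I_{g,0}(\bar{z})\subset I_{g,\zeta}(z_i)$ established inside Proposition~\ref{pp:continuity:dist:estict} covers only the genuinely active indices at $\bar{z}$, while an index that is merely \emph{borderline} $\zeta$-active there (i.e.\ $g_i(\bar{z})=g(\bar{z})-\zeta$) can leave $I_{g,\zeta}(z_i)$ along the sequence, so the corresponding vertex $\nabla g_i(\bar{z})$ of $\hat{\partial}_{\zeta}g(\bar{z})$ need not be approximable from $\hat{\partial}_{\zeta}g(z_i)$. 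I would therefore not quote Proposition~\ref{pp:continuity:dist:estict} blindly at $\bar{u}$; instead I would either confirm that the statement is only ever needed with a base point that is a true subgradient --- as in Proposition~\ref{pp:constraint:xi:finite}, where the relevant vector is $u_k-\frac{1}{\lambda}(\bar{z}_{k}-x_k)\in\partial g(\bar{z}_{k})$ and the estimate above then closes immediately --- or else reinforce the continuity proposition so that it controls these borderline indices for an arbitrary $\bar{u}\in\hat{\partial}_{\zeta}g(\bar{z})$. Deciding which of these is the intended reading, and handling the borderline-index case if the general statement is really required, is where I would concentrate the effort.
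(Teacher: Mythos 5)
Your setup is identical to the paper's: project $u$ onto $\hat{\partial}_{\zeta}g(\bar{z})$ to get $\bar{u}$, run the triangle inequality through a point $u_i\in\hat{\partial}_{\zeta}g(z_i)$, and reduce everything to $\lim_{i\to+\infty}\dist\parentheses{\bar{u},\hat{\partial}_{\zeta}g(z_i)}=0$. The point at which you hesitate is exactly the point at which the paper's own proof is deficient: the paper simply cites Proposition \ref{pp:continuity:dist:estict} at $\bar{u}$, even though that proposition assumes its base point lies in $\partial g(\bar{z})$, while the projection $\bar{u}$ is only known to lie in the (generally strictly larger) set $\hat{\partial}_{\zeta}g(\bar{z})$. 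The same unguarded citation occurs in the proof of Theorem \ref{thm:upper:semicont}. So your refusal to quote Proposition \ref{pp:continuity:dist:estict} blindly at $\bar{u}$ is not excessive caution; it identifies a genuine gap in the paper's argument.

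Moreover, the borderline-index phenomenon you describe does not merely break the proof; it falsifies the theorem as stated. Take $n=1$, $g=\max\{g_1,g_2\}$ with $g_1\equiv 0$ and $g_2(x)=x-\zeta$. At $\bar{z}=0$ both indices satisfy $g_i(0)\geq g(0)-\zeta=-\zeta$ (index $2$ with equality), so $\hat{\partial}_{\zeta}g(0)=\convhull\{0,1\}=[0,1]$; but for $z_i=-1/i\to 0$ one has $g_2(z_i)=-1/i-\zeta<g(z_i)-\zeta=-\zeta$, so $\hat{\partial}_{\zeta}g(z_i)=\{0\}$. With $u=1$ the claimed inequality reads $1=\limsup_{i}\dist(1,\{0\})\leq\dist(1,[0,1])=0$, which is false. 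The correct and sufficient repair is the one you sketch: what Proposition \ref{pp:constraint:xi:finite:IiPLDCA} actually needs is $\limsup_{i\to+\infty}\dist\parentheses{u_k,\hat{\partial}_{\zeta_k}g(z_i)}\leq\norm{u_k-\bar{v}}$ with $\bar{v}=u_k-\frac{1}{\lambda}(\bar{z}_{k}-x_k)\in\partial g(\bar{z}_{k})$, and this follows from your triangle inequality anchored at $\bar{v}$ together with a legitimate application of Proposition \ref{pp:continuity:dist:estict}; equivalently, the theorem should be restated with $\dist\parentheses{u,\partial g(\bar{z})}$ on the right-hand side. In short, your proposal reproduces the paper's argument, correctly diagnoses where it fails, and points to the right fix; what it does not (and cannot) do is prove the theorem in the generality stated.
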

\begin{proof}
Since $\hat{\partial}_{\zeta}g(\bar{z})$ is a nonempty closed convex set, the projection theorem \cite[Thm. 3.16]{bauschke2017} ensures the existence of $\bar{u}\in\hat{\partial}_{\zeta}g(\bar{z})$ satisfying
\begin{equation*}
\dist\parentheses{u,\hat{\partial}_{\zeta}g(\bar{z})}=\norm{u-\bar{u}}.
\end{equation*}
Similarly, for every $i\in\Natural$, there exists $u_i\in\hat{\partial}_{\zeta}g(z_i)$ such that
\begin{equation*}
\dist\parentheses{\bar{u},\hat{\partial}_{\zeta}g(z_i)}=\norm{\bar{u}-u_i},
\end{equation*}
which implies
\begin{equation}\label{eq:upper:cont:estrict:a}
\dist\parentheses{u,\hat{\partial}_{\zeta}g(z_i)} \leq\norm{u-u_i}\leq \norm{\bar{u}-u_i}+\norm{u-\bar{u}}\leq \dist\parentheses{\bar{u},\hat{\partial}_{\zeta}g(z_i)}+\norm{u-\bar{u}}. 
\end{equation}
Since $\lim_{i\to +\infty}\dist\parentheses{\bar{u},\hat{\partial}_{\zeta}g(z_i)}=0$ from Proposition \ref{pp:continuity:dist:estict}, taking the limit superior $i\to +\infty$ on both sides of (\ref{eq:upper:cont:estrict:a}) yields
\begin{equation*}
\limsup_{i\to +\infty} \dist\parentheses{u,\hat{\partial}_{\zeta}g(z_i)}\leq \norm{u-\bar{u}}=\dist\parentheses{u,\hat{\partial}_{\zeta}g(\bar{z})},
\end{equation*}
which completes the proof.
\end{proof}

A comparison among subdifferential, $\epsilon$-strict subdifferential and $\epsilon$-subdifferential is summarized in Table \ref{tab:comparision:subdiff}. From the table, we can see that $\epsilon$-strict subdifferential achieves the best trade-off between computability and continuity property, which in turn ensures the implementability and finite inner loop termination of the resulting implementation of tPLDCA that follows.

\begin{table}
\tbl{Comparison among three notions of subdifferential}
{\begin{tabular}{cccc}
\toprule
 & subdifferential  & $\epsilon$-strict subdifferential & $\epsilon$-subdifferential\\
\midrule
computability &  $\checkmark$  & $\checkmark$  & $\times$ \\
continuity property    &  $\times$ & $\checkmark$  & $\checkmark$ \\
\bottomrule
\end{tabular}}
\label{tab:comparision:subdiff}
\end{table}
\section{A Feasible Implementation of the Proposed tPLDCA}\label{sec:implementation}
In this section, we adopt $\epsilon$-strict subdifferential as a computational friendly surrogate of $\epsilon$-subdifferential, whereby we propose Algorithm \ref{alg:step:two}, which is a feasible implementation of tPLDCA for DC programs satisfying Assumption \ref{assumpt:decomp:g}.

\subsection{Implementability }
In Algorithm \ref{alg:step:two}, in place of ${\partial}_{\zeta_k}g(z_{i})$ in (\ref{eq:constraint:xik}), $\hat{\partial}_{\zeta_k}g(z_{i})$ is employed, to yield (\ref{eq:constraint:xik:IiPLDCA}). Since projection onto $\hat{\partial}_{\zeta_k}g(z_i)$ can be computed efficiently (Remark \ref{remark:computability:estrict}), $\dist\parentheses{u_k,\hat{\partial}_{\zeta_k}g(z_{i})}$ is computable, which resolves the main implementation difficulty faced in tPLDCA (Sec. \ref{subsec:implementability:CiPLDCA}). Therefore, Algorithm \ref{alg:step:two} is an implementable algorithm.

\begin{algorithm}
\textbf{Parameters:} Set $\sigma\in(0,1),\lambda>0,\theta>\frac{1}{\lambda},\rho\in\Big[0,\frac{1-\sigma}{\lambda}\Big),\gamma\in\Big[0,\frac{(1-\sigma)}{\lambda}-\rho\Big)$, $\lambda_k\coloneqq \lambda$ for every $k\in\Natural$ and set a sequence of positive numbers $(\zeta_k)_{k\in\Natural}$ such that $\lim_{k\to +\infty} \zeta_k=0$.

\vspace{1em}
Set $k=0$, $x_{-1}, x_0\in\dom{f}$. Repeat the following steps until convergence.
\vspace{1em}

\textbf{Step 1:} find $u_k\in\Real^n$ such that
\begin{equation}\tag{\ref{eq:constraint:uk}}
\dist\parentheses{u_k,\partial_{\epsilon_k}h(x_k)}\leq d_k,
\end{equation}
$\quad\quad\quad\;\;\;$where $\epsilon_k=\rho\norm{x_k-x_{k-1}}^2$, $d_k=\gamma\norm{x_k-x_{k-1}}$.

\textbf{Step 2:} set $z_{-1}=x_k$ and generate $z_0,z_1,\dots$ by $T_g(x_k,u_k,\lambda_k)$ until
\begin{equation}\tag{\ref{eq:decrease:g}}
g(x_k)-g(z_{i})-\innerp{u_k}{x_k-z_{i}}\geq \frac{(1-\sigma)}{\lambda}\norm{z_{i}-x_k}^2
\end{equation}
$\quad\quad\quad\;\;\;$and
\begin{equation}\label{eq:constraint:xik:IiPLDCA}
\dist\parentheses{u_k,\hat{\partial}_{\zeta_k}g(z_{i})}\leq \theta\norm{z_{i}-x_k}.
\end{equation}
$\quad\quad\quad\;\;\;$hold simultaneously for some $i\in\braces{-1}\cup\Natural$. Then set $x_{k+1}\coloneqq z_i$.

\textbf{Step 3:} $k\leftarrow k+1$.
\caption{A Feasible Implementation of tPLDCA}\label{alg:step:two}
\end{algorithm}

\subsection{Convergence Properties}
Since $\hat{\partial}_{\zeta_k}g(z_i)\subset \partial_{\zeta_k}g(z_i)$, it can be verified that (\ref{eq:constraint:xik:IiPLDCA}) is a stricter condition for $x_{k+1}$ than (\ref{eq:constraint:xik}). Therefore, Algorithm \ref{alg:step:two} can be viewed as a special implementation of tPLDCA and inherit all convergence properties from it. This statement is formalized as follows. 

\begin{theorem}\label{thm:convergence:IiPLDCA}
The sequence $(x_k)_{k\in\Natural}$ generated by Algorithm \ref{alg:step:two} satisfies:
\begin{enumerate}
\item[(a)] $\parentheses{f(x_k)+(\frac{\gamma}{2}+\rho)\norm{x_k-x_{k-1}}^2}_{k\in\Natural}$ is decreasing.

\item[(b)] $\lim_{k\to +\infty}\norm{x_{k+1}-x_k}=0$.

\item[(c)] $(f(x_k))_{k\in\Natural}$ is convergent.
\end{enumerate}
Moreover, if there exists a cluster point, say $\bar{x}$, of $(x_k)_{k\in\Natural}$, then:
\begin{enumerate}
\item[(d)] $\lim_{k\to +\infty} f(x_k)=f(\bar{x})$.

\item[(e)] $\bar{x}$ is a critical point of $f$, i.e., $\partial g(\bar{x})\cap \partial h(\bar{x})\neq \emptyset$.
\end{enumerate}
\end{theorem}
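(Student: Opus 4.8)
The plan is to exploit the fact that Algorithm \ref{alg:step:two} is nothing but tPLDCA (Algorithm \ref{alg:concept}) with the single constraint (\ref{eq:constraint:xik}) tightened to (\ref{eq:constraint:xik:IiPLDCA}); every other ingredient (the step~1 rule (\ref{eq:constraint:uk}), the descent test (\ref{eq:decrease:g}), the parameter ranges, and the use of $T_g$) is verbatim identical. My strategy is therefore a \emph{reduction}: I would show that any sequence $(x_k)_{k\in\Natural}$ produced by Algorithm \ref{alg:step:two} is also admissible for tPLDCA, so that (a)--(e) follow immediately from Theorem \ref{thm:convergence:functional} and Theorem \ref{thm:convergence:point} without reproving anything.

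First I would record the elementary monotonicity of the distance function under set inclusion: if $A\subset B$ then $\dist(u,B)\leq \dist(u,A)$. Combining this with the inclusion $\hat{\partial}_{\zeta_k}g(z_i)\subset\partial_{\zeta_k}g(z_i)$ from Proposition \ref{pp:subset:hatpartial} gives, for every inner iterate,
\begin{equation*}
\dist\parentheses{u_k,\partial_{\zeta_k}g(z_i)}\leq \dist\parentheses{u_k,\hat{\partial}_{\zeta_k}g(z_i)}.
\end{equation*}
Hence whenever $z_i$ satisfies (\ref{eq:constraint:xik:IiPLDCA}) it also satisfies (\ref{eq:constraint:xik}). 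Since (\ref{eq:decrease:g}) and (\ref{eq:constraint:uk}) are literally the same in both algorithms, the point $x_{k+1}$ selected by Algorithm \ref{alg:step:two} satisfies exactly the conditions tPLDCA imposes on its iterate; thus $(x_k)_{k\in\Natural}$ is a legitimate tPLDCA trajectory, and Theorem \ref{thm:convergence:functional} yields (a)--(c) while Theorem \ref{thm:convergence:point} yields (d)--(e). For (e) I would double check the one place in the proof of Theorem \ref{thm:convergence:point}(b) that extracts an element $\xi_{k+1}$ from the constraint set: under Algorithm \ref{alg:step:two} this element lies in $\hat{\partial}_{\zeta_k}g(x_{k+1})$, but the inclusion $\hat{\partial}_{\zeta_k}g(x_{k+1})\subset\partial_{\zeta_k}g(x_{k+1})$ makes it a genuine $\zeta_k$-subgradient, so the $\epsilon$-subgradient inequality used in passing to the limit is unchanged.

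For the statement to be meaningful I would also note that the iterates are well defined, i.e., the inner loop of Algorithm \ref{alg:step:two} terminates so that $x_{k+1}$ exists. This is implicit rather than part of (a)--(e); I would obtain it by replaying Theorem \ref{thm:finite:termination}, using Proposition \ref{pp:decrease:g:finite} unchanged for (\ref{eq:decrease:g}) and the $\epsilon$-strict analogue of Proposition \ref{pp:constraint:xi:finite} for (\ref{eq:constraint:xik:IiPLDCA}). That analogue goes through verbatim because the two properties it relies on both transfer to $\hat{\partial}$: the inclusion $\partial g(\bar{z}_k)\subset\hat{\partial}_{\zeta_k}g(\bar{z}_k)$ (first half of Proposition \ref{pp:subset:hatpartial}) gives the strictly negative value $D(\bar{z}_k)\leq\parentheses{\tfrac{1}{\lambda}-\theta}\norm{\bar{z}_k-x_k}<0$, and the upper semicontinuity of $\dist(u,\hat{\partial}_{\zeta}g(\cdot))$ (Theorem \ref{thm:upper:semicont:estrict}) provides $\limsup_{i\to+\infty}D(z_i)<0$.

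I do not expect a genuine obstacle here; the content of the theorem is the bookkeeping observation that tightening the constraint can only \emph{shrink} the feasible set of $x_{k+1}$, never introduce behaviour absent from tPLDCA. The only point demanding care — the closest thing to a pitfall — is to confirm that no step in the original convergence proofs secretly exploited the \emph{largeness} of $\partial_{\zeta_k}g$ (e.g.\ a lower bound on the constraint set); inspecting those proofs shows they use only that the selected $\xi_{k+1}$ is \emph{some} $\zeta_k$-subgradient meeting the norm bound, a property preserved under the inclusion. Once this is verified, (a)--(e) are immediate corollaries.
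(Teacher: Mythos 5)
Your proposal is correct and takes essentially the same approach as the paper: the paper's own proof is exactly this reduction, using the inclusion $\hat{\partial}_{\zeta_k}g(x_{k+1})\subset\partial_{\zeta_k}g(x_{k+1})$ from Proposition \ref{pp:subset:hatpartial} to conclude that every sequence generated by Algorithm \ref{alg:step:two} is a legitimate tPLDCA sequence, and then invoking Theorems \ref{thm:convergence:functional} and \ref{thm:convergence:point}. The additional well-definedness check you mention is treated separately in the paper (Proposition \ref{pp:constraint:xi:finite:IiPLDCA} and Theorem \ref{thm:inner_loop_termination_of_Alg3}) and matches your sketch.
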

\begin{proof}
Since $\hat{\partial}_{\zeta_k} g(x_{k+1})\subset \partial_{\zeta_k} g(x_{k+1})$ from Proposition \ref{pp:subset:hatpartial}, the following holds
\begin{equation*}
(\forall k\in\Natural)\;\;\dist(u_k,\partial_{\zeta_k}g(x_{k+1}))\leq \dist(u_k,\hat{\partial}_{\zeta_k}g(x_{k+1}))\leq \theta\norm{x_{k+1}-x_k},
\end{equation*}
Therefore, for any $k\in\Natural$, $x_{k+1}$ satisfies (\ref{eq:constraint:xik}). Since $u_k$ satisfies (\ref{eq:constraint:uk}) and $x_{k+1}$ satisfies (\ref{eq:decrease:g}), we conclude that every sequence $(x_k)_{k\in\Natural}$ generated by Algorithm \ref{alg:step:two} can be viewed as a sequence generated by tPLDCA. Therefore, the conclusion follows from Theorem \ref{thm:convergence:functional} and \ref{thm:convergence:point}.
\end{proof}

\subsection{Finite Termination of Inner Loop}
\label{subsec:finite_termination_of_ItPLDCA}
Since the only difference between Algorithm \ref{alg:step:two} and tPLDCA lies in (\ref{eq:constraint:xik:IiPLDCA}), Proposition \ref{pp:decrease:g:finite} still holds for Algorithm \ref{alg:step:two}. Therefore, to prove the finite inner loop termination of Algorithm \ref{alg:step:two}, we only need to reestablish Proposition \ref{pp:constraint:xi:finite} for (\ref{eq:constraint:xik:IiPLDCA}). See the following proposition.

\begin{proposition}(Achievability of (\ref{eq:constraint:xik:IiPLDCA}))\label{pp:constraint:xi:finite:IiPLDCA}
Suppose that $x_k$ is not a solution of (PP), then (\ref{eq:constraint:xik:IiPLDCA}) remains satisfied after finite iterations of $T_g(x_k,u_k,\lambda)$, i.e., $\exists N_k\in \Natural$ such that
\begin{equation*}
(\forall i\geq N_k)\;\;\dist(u_k,\hat{\partial}_{\zeta_k}g(z_{i}))\leq \theta\norm{z_{i}-x_k}.
\end{equation*}
\end{proposition}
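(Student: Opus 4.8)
The plan is to mirror, almost verbatim, the proof of Proposition~\ref{pp:constraint:xi:finite}, replacing the $\epsilon$-subdifferential $\partial_{\zeta_k}g$ throughout by the $\epsilon$-strict subdifferential $\hat{\partial}_{\zeta_k}g$. First I would invoke Assumption~\ref{assumpt:solution:steptwo} together with the hypothesis that $x_k$ is not a solution of (PP) and $\lambda_k=\lambda$: this furnishes a point $\bar{z}_{k}\neq x_k$ in $\dom{g}$ with $u_k-\frac{1}{\lambda}(\bar{z}_{k}-x_k)\in\partial g(\bar{z}_{k})$. This is exactly the same starting point as before, so no new work is needed here.

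The key substitution enters next. I would introduce the auxiliary function
\begin{equation*}
\hat{D}(z)\coloneqq \dist\parentheses{u_k,\hat{\partial}_{\zeta_k}g(z)}-\theta\norm{z-x_k}
\end{equation*}
over $\dom{g}$, and evaluate it at $\bar{z}_{k}$. The only place where the argument genuinely differs from Proposition~\ref{pp:constraint:xi:finite} is that I must now use Proposition~\ref{pp:subset:hatpartial} (the inclusion $\partial g(\bar{z}_{k})\subset\hat{\partial}_{\zeta_k}g(\bar{z}_{k})$, valid since $\zeta_k>0$) in place of the trivial inclusion $\partial g\subset\partial_{\zeta_k}g$. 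This gives $u_k-\frac{1}{\lambda}(\bar{z}_{k}-x_k)\in\hat{\partial}_{\zeta_k}g(\bar{z}_{k})$, hence $\dist\parentheses{u_k,\hat{\partial}_{\zeta_k}g(\bar{z}_{k})}\leq\frac{1}{\lambda}\norm{\bar{z}_{k}-x_k}$, and therefore $\hat{D}(\bar{z}_{k})\leq\parentheses{\frac{1}{\lambda}-\theta}\norm{\bar{z}_{k}-x_k}<0$, where strictness comes from $\theta>\frac{1}{\lambda}$ and $\bar{z}_{k}\neq x_k$.

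Finally I would invoke the upper semicontinuity of $\dist\parentheses{u_k,\hat{\partial}_{\zeta_k}g(\cdot)}$, namely Theorem~\ref{thm:upper:semicont:estrict}, which is the $\epsilon$-strict analogue of Theorem~\ref{thm:upper:semicont} that was the engine of the original proof. Since $z_i\to\bar{z}_{k}$ by Assumption~\ref{assumpt:solution:steptwo}, this yields $\limsup_{i\to+\infty}\hat{D}(z_i)\leq\hat{D}(\bar{z}_{k})<0$, so there exists $N_k\in\Natural$ with $\hat{D}(z_i)\leq 0$ for all $i\geq N_k$, which is precisely the claimed inequality. I do not anticipate a genuine obstacle: the entire proposition is a transplant of Proposition~\ref{pp:constraint:xi:finite}, and the two substitutions it requires—Proposition~\ref{pp:subset:hatpartial} for the subgradient inclusion and Theorem~\ref{thm:upper:semicont:estrict} for the continuity property—are exactly the two results established earlier in Section~\ref{sec:subdifferential} to make this replacement legitimate. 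The only point demanding mild care is to confirm that the positivity $\zeta_k>0$ is in force so that Proposition~\ref{pp:subset:hatpartial} applies, which holds since $(\zeta_k)_{k\in\Natural}$ is a sequence of positive numbers by the parameter specification of Algorithm~\ref{alg:step:two}.
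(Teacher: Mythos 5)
Your proposal is correct and follows essentially the same route as the paper's own proof: the paper likewise transplants the argument of Proposition \ref{pp:constraint:xi:finite}, using the inclusion $\partial g(\bar{z}_k)\subset\hat{\partial}_{\zeta_k}g(\bar{z}_k)$ from Proposition \ref{pp:subset:hatpartial} to get $D(\bar{z}_k)\leq(\frac{1}{\lambda}-\theta)\norm{\bar{z}_k-x_k}<0$ and then Theorem \ref{thm:upper:semicont:estrict} to pass to the limit superior along $z_i\to\bar{z}_k$. No gaps.
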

\begin{proof}
From Assumption \ref{assumpt:solution:steptwo}, $0\not\in\tilde{f}_k(x_k)$ and $\lambda_k=\lambda$, there exists $\bar{z}_{k}\neq x_k$ such that $u_k-\frac{1}{\lambda}(\bar{z}_{k}-x_k)\in\partial g(\bar{z}_{k})$.
Consider the following function:
\begin{equation*}
D(z)\coloneqq \dist\parentheses{u_k,\hat{\partial}_{\zeta_k}g(z)}-\theta\norm{z-x_k}.
\end{equation*}
Since $u_k-\frac{1}{\lambda}(\bar{z}_{k}-x_k)\in\partial g(\bar{z}_{k})\subset\hat{\partial}_{\zeta_k}g(\bar{z}_k)$, we have
\[\dist\parentheses{u_k,\hat{\partial}_{\zeta_k}g(\bar{z}_{k})}\leq \frac{1}{\lambda}\norm{\bar{z}_{k}-x_k}\]
thus
\begin{equation*}
D(\bar{z}_{k})\leq \parentheses{\frac{1}{\lambda}-\theta}\norm{\bar{z}_{k}-x_k}<0.
\end{equation*}
Since $z_i\to\bar{z}_{k}$, the upper semicontinuity of $D(z)$ (Theorem \ref{thm:upper:semicont:estrict}) then yields
\begin{equation*}
\limsup_{i\to+\infty}D(z_i)\leq D(\bar{z}_{k})<0.
\end{equation*}
Therefore, there exists $N_k\in\Natural$ such that
\begin{equation*}
(\forall i\geq N_k)\;\; D(z_i)\leq 0.
\end{equation*}
Expanding the expression of $D(z_i)$ yields
\begin{equation*}
(\forall i\geq N_k)\;\; \dist\parentheses{u_k,\hat{\partial}_{\zeta_k}g(z_i)}\leq\theta\norm{z_i-x_k},
\end{equation*}
which completes the proof.
\end{proof}

Applying Proposition \ref{pp:constraint:xi:finite:IiPLDCA}, we can prove the finite inner loop termination of Algorithm \ref{alg:step:two}.

\begin{theorem}[Inner loop termination of Algorithm \ref{alg:step:two}]\label{thm:inner_loop_termination_of_Alg3}
The step 2 of Algorithm \ref{alg:step:two} terminates within finite inner iterations, i.e., there exists $N_k\in \Natural$ such that $z_{N_k}$ satisfies simultaneously
\begin{equation}\tag{\ref{eq:decrease:g:finite}}
g(x_k)-g(z_{N_k})-\innerp{u_k}{x_k-z_{N_k}}\geq \frac{(1-\sigma)}{\lambda}\norm{z_{N_k}-x_k}^2
\end{equation}
and
\begin{equation}\label{eq:constraint:xi:finite:IiPLDCA}
\dist(u_k,\hat{\partial}_{\zeta_k}g(z_{N_k}))\leq \theta\norm{z_{N_k}-x_k}.
\end{equation}
\end{theorem}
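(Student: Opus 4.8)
The plan is to mirror the proof of Theorem \ref{thm:finite:termination} almost verbatim, since the only structural change in Algorithm \ref{alg:step:two} is the replacement of $\partial_{\zeta_k}g(z_i)$ by $\hat{\partial}_{\zeta_k}g(z_i)$ in the stopping condition. First I would invoke Assumption \ref{assumpt:solution:steptwo} together with $\lambda_k=\lambda$ to secure the existence of $\bar{z}_{k}\in\dom{g}$ satisfying
\[
u_k-\tfrac{1}{\lambda}(\bar{z}_{k}-x_k)\in\partial g(\bar{z}_{k}),
\]
and then split into the two cases $\bar{z}_{k}=x_k$ and $\bar{z}_{k}\neq x_k$, exactly as before.

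In the trivial case $\bar{z}_{k}=x_k$, the decrease condition (\ref{eq:decrease:g:finite}) holds at $z_{-1}=x_k$ with both sides equal to zero. For (\ref{eq:constraint:xi:finite:IiPLDCA}), I would observe that $\bar{z}_{k}=x_k$ forces $u_k\in\partial g(x_k)=\partial g(z_{-1})$, and then appeal to the inclusion $\partial g(z_{-1})\subset\hat{\partial}_{\zeta_k}g(z_{-1})$ supplied by Proposition \ref{pp:subset:hatpartial}; this gives $\dist(u_k,\hat{\partial}_{\zeta_k}g(z_{-1}))=0\leq\theta\norm{z_{-1}-x_k}=0$. Hence $z_{-1}$ already satisfies both conditions and the inner loop terminates immediately.

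In the nontrivial case $\bar{z}_{k}\neq x_k$, I would treat the two conditions separately. Since (\ref{eq:decrease:g}) is unchanged from tPLDCA, Proposition \ref{pp:decrease:g:finite} applies unaltered and furnishes $N_{1,k}\in\Natural$ beyond which (\ref{eq:decrease:g:finite}) holds. For (\ref{eq:constraint:xi:finite:IiPLDCA}), I would instead invoke the freshly established Proposition \ref{pp:constraint:xi:finite:IiPLDCA} (the $\epsilon$-strict-subdifferential analogue of Proposition \ref{pp:constraint:xi:finite}), yielding $N_{2,k}\in\Natural$ beyond which (\ref{eq:constraint:xi:finite:IiPLDCA}) holds. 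Setting $N_k\coloneqq\max\{N_{1,k},N_{2,k}\}$ makes both conditions hold simultaneously at $z_{N_k}$, which completes the argument.

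I do not expect any genuine obstacle here: all the analytic content has already been discharged upstream, in particular the upper semicontinuity of $\dist(u,\hat{\partial}_{\zeta}g(\cdot))$ (Theorem \ref{thm:upper:semicont:estrict}), which is what makes Proposition \ref{pp:constraint:xi:finite:IiPLDCA} available. The only points requiring mild care are the bookkeeping of taking the maximum of the two threshold indices so that both conditions hold at a \emph{common} iterate, and the correct use of the inclusion $\partial g\subset\hat{\partial}_{\zeta_k}g$ in the degenerate case $\bar{z}_{k}=x_k$; neither is substantive.
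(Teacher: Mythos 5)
Your proposal is correct and follows essentially the same route as the paper's proof: the same case split on whether $\bar{z}_k=x_k$, the same use of $\partial g\subset\hat{\partial}_{\zeta_k}g$ in the degenerate case, and the same combination of Proposition \ref{pp:decrease:g:finite} with Proposition \ref{pp:constraint:xi:finite:IiPLDCA} via a maximum of the two indices in the nontrivial case. No gaps.
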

\begin{proof}
From Assumption \ref{assumpt:solution:steptwo} and $\lambda_k=\lambda$, there exists $\bar{z}_{k}\in\dom{g}$ such that 
\[u_k-\frac{1}{\lambda}(\bar{z}_{k}-x_k)\in\partial g(\bar{z}_{k}).\]
\begin{enumerate}
\item[(I)] If $\bar{z}_{k}=x_k$, then since $z_0=x_k$ (Assumption \ref{assumpt:solution:steptwo}(a)), we have
\begin{equation*}
g(x_k)-g(z_0)-\innerp{u_k}{x_k-z_0}=0\geq \frac{(1-\sigma)}{\lambda}\norm{z_0-x_k}^2=0.
\end{equation*}
In addition, since $z_0=x_k=\bar{z}_{k}$, we have $u_k\in \partial g(z_0)\subset\hat{\partial}_{\zeta_k}g(z_0)$, thus
\begin{equation*}
\dist\parentheses{u_k,\hat{\partial}_{\zeta_k}g(z_0)}=0\leq \theta\norm{z_0-x_k}^2=0.
\end{equation*}
Therefore, $N_k=0$ satisfies (\ref{eq:decrease:g:finite}) and (\ref{eq:constraint:xi:finite:IiPLDCA}).

\item[(II)] If $\bar{z}_{k}\neq x_k$. Then from Proposition \ref{pp:decrease:g:finite}, there exists $N_{1,k}\in\Natural$ such that
\begin{equation*}
(\forall i\geq N_{1,k})\;\; g(x_k)-g(z_i)-\innerp{u_k}{x_k-z_i}\geq \frac{(1-\sigma)}{\lambda}\norm{z_i-x_k}.
\end{equation*}
From Proposition \ref{pp:constraint:xi:finite:IiPLDCA}, there exists $N_{2,k}\in\Natural$ such that
\begin{equation*}
(\forall i\geq N_{2,k})\;\; \dist(u_k,\hat{\partial}_{\zeta_k}g(z_i))\leq \theta \norm{z_i-x_k}.
\end{equation*}
Therefore, $N_k\coloneqq \max\{N_{1,k},N_{2,k}\}$ satisfies (\ref{eq:decrease:g:finite}) and (\ref{eq:constraint:xi:finite:IiPLDCA}).
\end{enumerate}
Combining (I) and (II) completes the proof.
\end{proof}

\section{Numerical Results}\label{sec:numerical_results}

In this section, we present a simple numerical example to illustrate how to implement Algorithm \ref{alg:step:two}, and to verify its convergence properties and finite inner loop termination. 

We consider $f(x)\coloneqq g(x)-h(x)$ with $x\coloneqq \parentheses{x_{a},x_{b}}\in\Real^2$ and $g,h$ defined as follows:
\begin{align*}
g(x) &\coloneqq x_{a}^2+x_{b}^2+x_{a}x_{b}+\max(-x_a,0),\\
h(x) &\coloneqq \frac{1}{2}(x_b-1)^2.
\end{align*}
Then to minimize $f$ by Algorithm \ref{alg:step:two}, the following requirements must be satisfied:
\begin{enumerate}
\item[(a)] For every $k\in\Natural$, we can obtain $u_k\in\Real^2$ satisfying (\ref{eq:constraint:uk}).

\item[(b)] There exists an inner iterative algorithm $T_g$ satisfying Assumption \ref{assumpt:solution:steptwo}.

\item[(c)] For every $u_k,x_k\in\Real^2$, the conditions (\ref{eq:decrease:g}) and (\ref{eq:constraint:xik:IiPLDCA}) can be examined exactly.
\end{enumerate}
In the sequel, we show in turn that the three requirements above are all satisfied in the considered DC program. 

First, since $h$ is differentiable with $\nabla h(x)=(0,x_b-1)$, setting $u_k=\nabla h(x_k)$ satisfies (\ref{eq:constraint:uk}) regardless of the values of $\rho$ and $\gamma$. 

Second, according to Assumption \ref{assumpt:solution:steptwo}, the inner iterative algorithm $T_g(x_k,u_k,\lambda_k)$ generates a sequence $(z_i)_{i\in\Natural}$ which converges to a minimizer of 
\begin{align*}
\tilde{f}_k(z) \coloneqq g(z)-\innerp{u_k}{z-x_k}+\frac{1}{2\lambda_k}\norm{z-x_k}^2.
\end{align*}
One can verify that $\tilde{f}_k$ can be decomposed as the sum of a differentiable convex function with Lipschitz continuous gradient and a proximable convex function (i.e., the proximity operator \cite[Sec. 12.4]{bauschke2017} of which can be computed efficiently), hence we can adopt ISTA (see \cite[Sec. 2.3]{beck2009} and \cite{combettes2005}) as the inner iterative algorithm\footnote{One may notice that for this simple 2D example, the minimizer of $\tilde{f}_k$ has a closed-form expression. Despite this, we minimize $\tilde{f}_k$ by an iterative algorithm to showcase the finite inner loop termination of Algorithm \ref{alg:step:two}.}. The inner iterate sequence $(z_i)_{i\in\Natural}$ can be computed iteratively as follows:
\begin{equation*}
z_{i}\coloneqq \textrm{ISTA}(z_{i-1};k),
\end{equation*}
where $\textrm{ISTA}(\cdot;k)$ is the ISTA iteration step \cite[Sec. 2.3]{beck2009} for minimizing $\tilde{f}_k$.

Third, for given $x_k,u_k\in\Real^2$, the condition (\ref{eq:decrease:g}) is equivalent to
\begin{equation}
\Delta_{k,i}^{(\ref{eq:decrease:g})}\coloneqq g(x_k)-g(z_{i})-\innerp{u_k}{x_k-z_{i}}- \frac{(1-\sigma)}{\lambda}\norm{z_{i}-x_k}^2\geq 0.
\end{equation}
Since $\Delta_{k,i}^{(\ref{eq:decrease:g})}$ can be computed in closed-form, (\ref{eq:decrease:g}) can be examined exactly. On the other hand, (\ref{eq:constraint:xik:IiPLDCA}) is equivalent to 
\begin{equation}
\Delta_{k,i}^{(\ref{eq:constraint:xik:IiPLDCA})} \coloneqq \theta\norm{z_i-x_k}-\dist\parentheses{u_k,\hat{\partial}_{\zeta_k}g(z_i)}\geq 0.
\end{equation}
Note that $g(x)=\max(g_1(x),g_2(x))$, where
\begin{align*}
g_1(x) &\coloneqq x_a^2+x_b^2+x_ax_b-x_a,\\
g_2(x) &\coloneqq x_a^2+x_b^2+x_ax_b,
\end{align*}
we can obtain the closed-form expression\footnote{We note that for this example, $\partial g(z_i)\subsetneq \hat{\partial}_{\zeta_k}g(z_i)\subsetneq \partial_{\zeta_k}g(z_i)$, thus Algorithm \ref{alg:step:two} is a nontrivial extension of Algorithm \ref{alg:souza}, and is a non-straightforward implementation of Algorithm \ref{alg:concept}.} of $\hat{\partial}_{\zeta_k}g(z_i)$ by Definition \ref{def:estrict:subdiff}:
\begin{equation*}
\hat{\partial}_{\zeta_k}g(z_i)=\begin{cases}
\braces{(2z_{i,a}+z_{i,b}-1,2z_{i,b}+z_{i,a})}, & \textrm{if }z_{i,a}<-\zeta_k;\\
[2z_{i,a}+z_{i,b}-1,2z_{i,a}+z_{i,b}]\times\braces{2z_{i,b}+z_{i,a}}, & \textrm{if }-\zeta_k\leq z_{i,a}\leq \zeta_k;\\
\braces{(2z_{i,a}+z_{i,b},2z_{i,b}+z_{i,a})}, &\textrm{if }z_{i,a}>\zeta_k.
\end{cases}
\end{equation*}
Accordingly, $\Delta_{k,i}^{(\ref{eq:constraint:xik:IiPLDCA})}$ can be computed in closed-form and (\ref{eq:constraint:xik:IiPLDCA}) can be examined exactly.

\begin{figure}[h]
\centering
\begin{tabular}{c}
\begin{minipage}{0.48\linewidth}
\centering
\includegraphics[height=4cm]{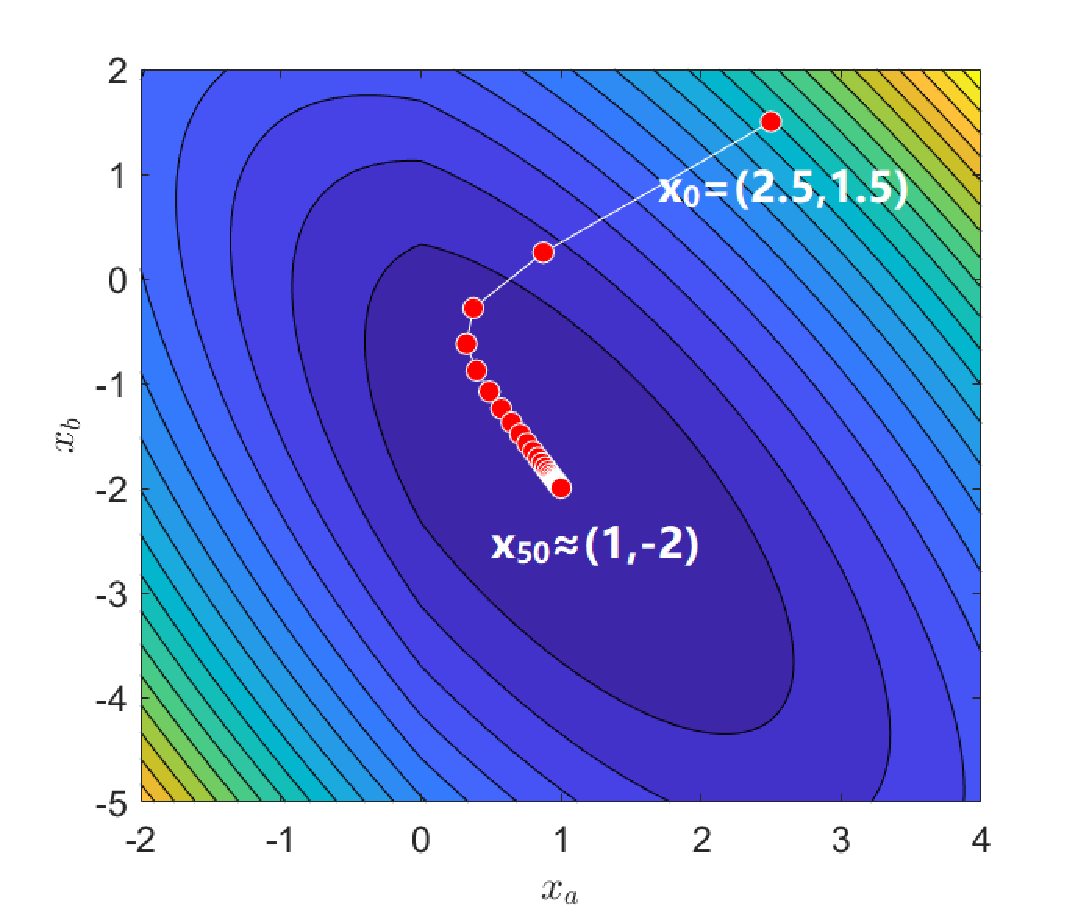}\caption{The contours of $f$ and $(x_k)_{k=0}^{50}$.}\label{fig:contours_and_iterates}
\end{minipage}
\hfill
\begin{minipage}{0.48\linewidth}
\centering
\includegraphics[height=4cm]{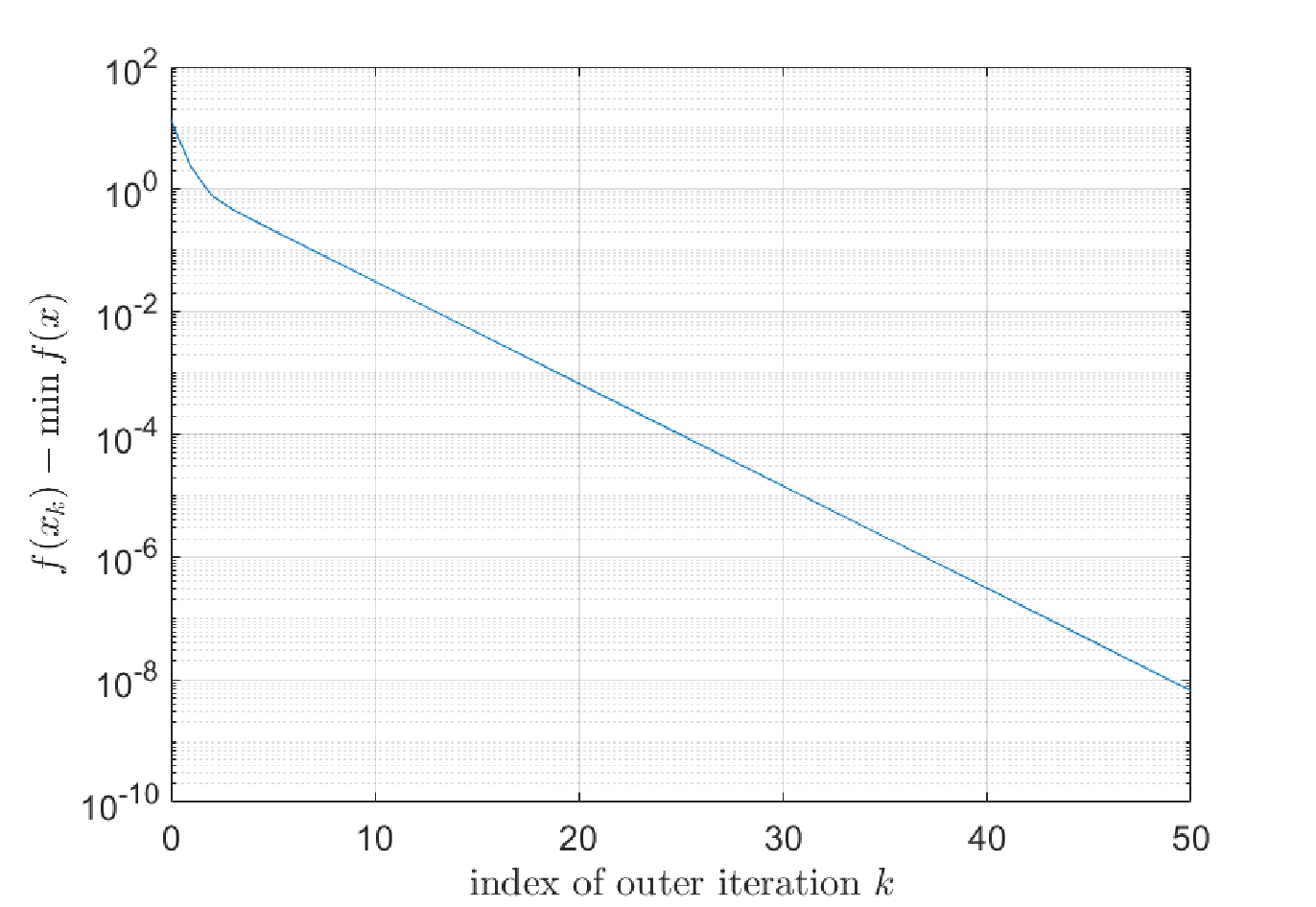}\caption{The residual error ${f(x_k)-\min_x f(x)}$.}\label{fig:residual_error}
\end{minipage}
\end{tabular}
\end{figure}

Applying the results above to Algorithm \ref{alg:step:two} yields the pseudocode for solving the considered DC program; See Algorithm \ref{alg:pseudocode}. We set the hyperparameters $\sigma=0.01,\lambda=1,\theta=1.1/\lambda,x_0=(2.5,1.5)$ and $\zeta_k=1/k^2$, and run Algorithm \ref{alg:pseudocode} for 50 outer iterations.

The results are as follows. Fig. \ref{fig:contours_and_iterates} shows the contours of the objective function $f$, and the positions of the iterates $(x_k)_{k=0}^{50}$. Fig. \ref{fig:residual_error} shows the residual error $f(x_k)-\min_{x}f(x)$ versus the index of outer iteration $k$. From Fig. \ref{fig:contours_and_iterates} and \ref{fig:residual_error}, one can verify that Algorithm \ref{alg:step:two} converges to a critical point of $f$, namely, the global minimizer of $f$ in this problem. 

Fig. \ref{fig:del_a} and \ref{fig:del_b} show the dependency of the inner loop termination metrics $\Delta_{k,i}^{(\ref{eq:decrease:g})}$ and $\Delta_{k,i}^{(\ref{eq:constraint:xik:IiPLDCA})}$ on the index of inner iteration $i$ with different values of $k$. The two figures demonstrate that as Algorithm \ref{alg:step:two} proceeds, (\ref{eq:decrease:g}) and (\ref{eq:constraint:xik:IiPLDCA}) can always get satisfied simultaneously within finite iterations, which verifies the finite inner loop termination property we claimed in Sec. \ref{subsec:finite:terminate:CiPLDCA} and \ref{subsec:finite_termination_of_ItPLDCA}.

\begin{figure}
\centering
\subfloat[][$k=10$.]{\includegraphics[width=5cm]{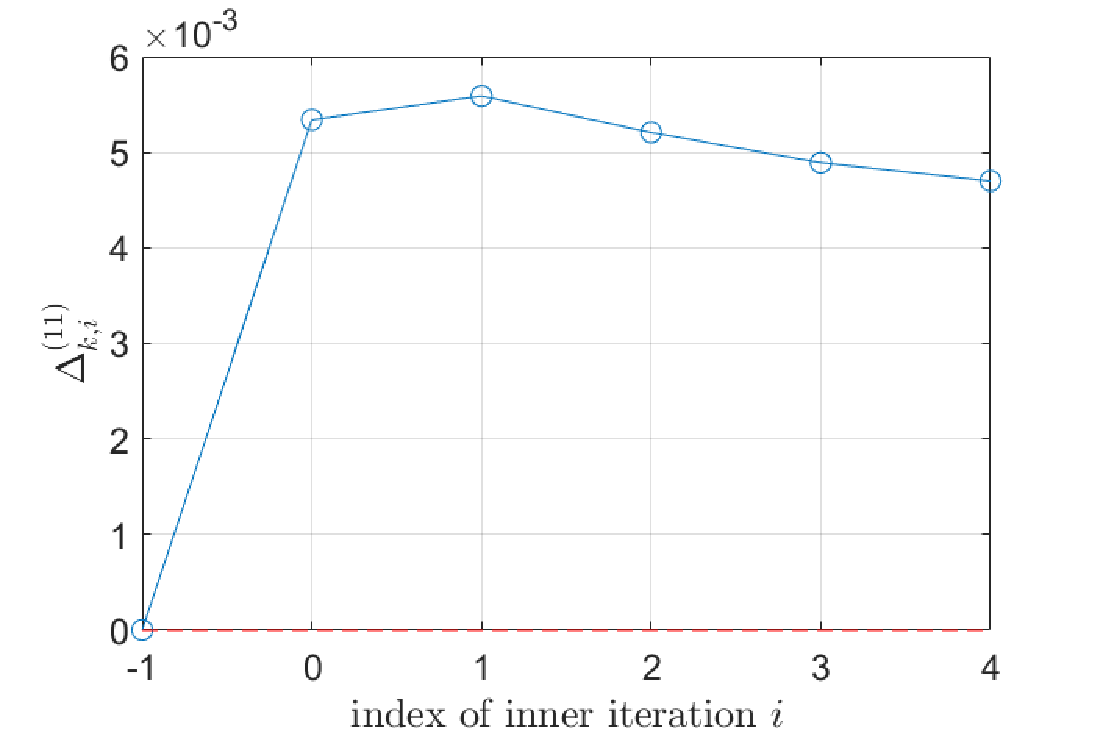}}
\subfloat[][$k=30$.]{\includegraphics[width=5cm]{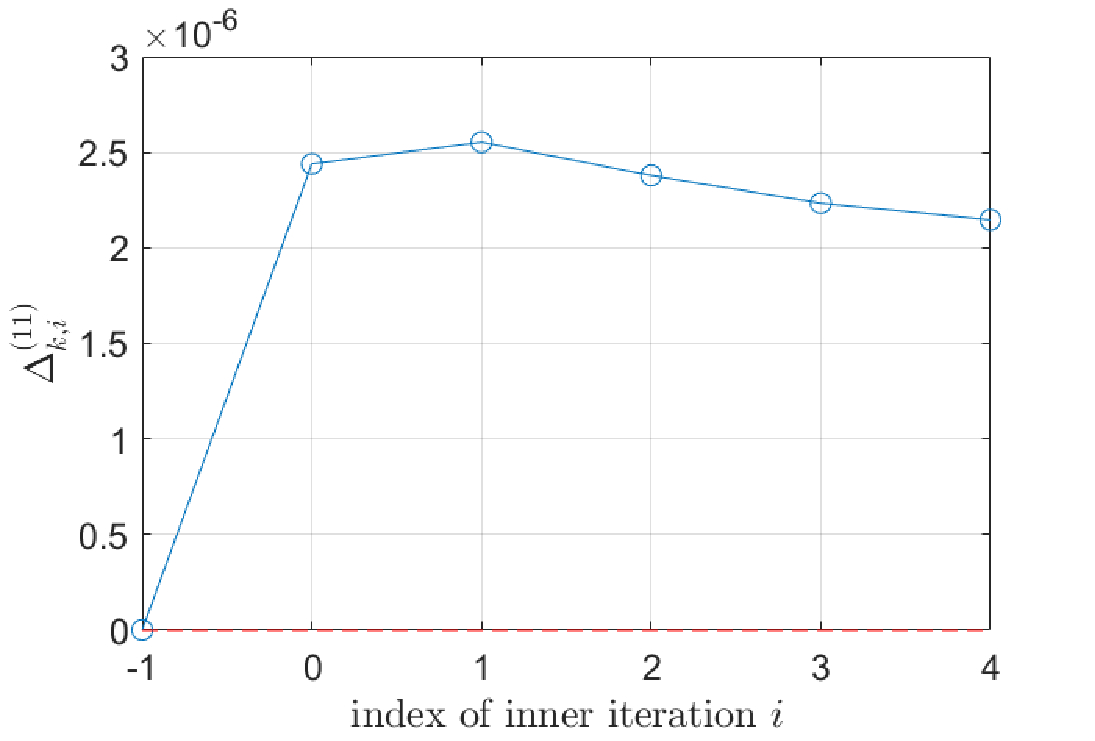}}
\subfloat[][$k=50$.]{\includegraphics[width=5cm]{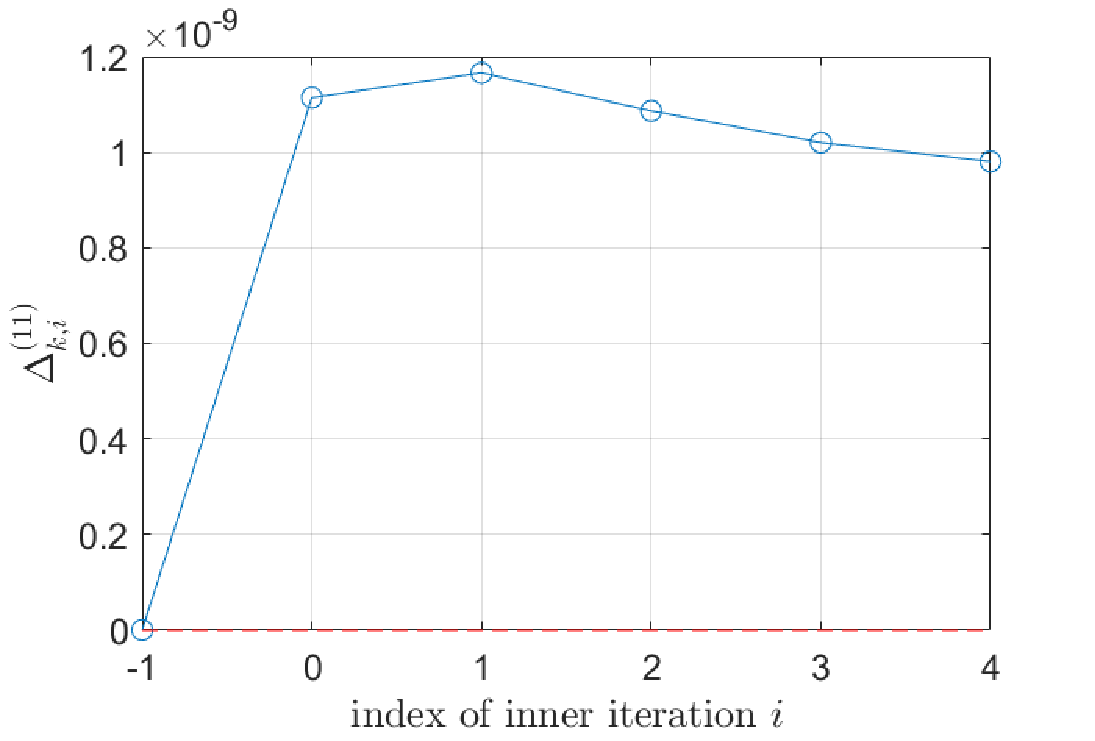}}
\caption{$\Delta_{k,i}^{(\ref{eq:decrease:g})}$ versus the index of inner iteration $i$.}
\label{fig:del_a}
\end{figure}
\begin{figure}
\centering
\subfloat[][$k=10$.]{\includegraphics[width=5cm]{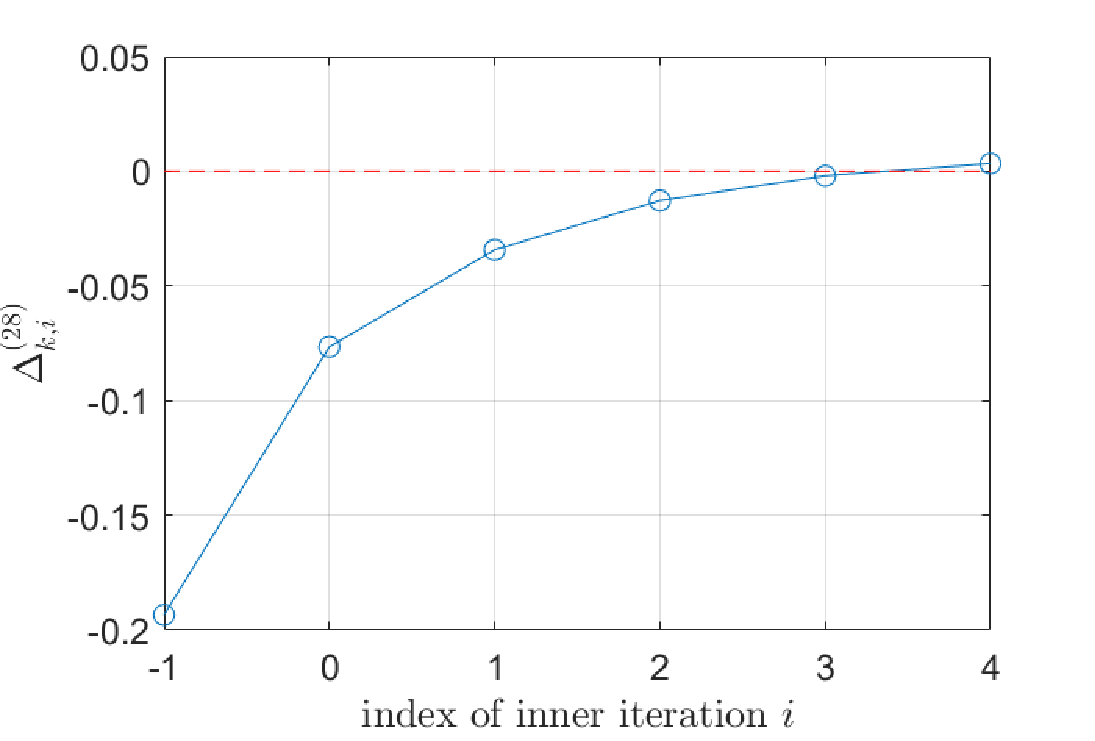}}
\subfloat[][$k=30$.]{\includegraphics[width=5cm]{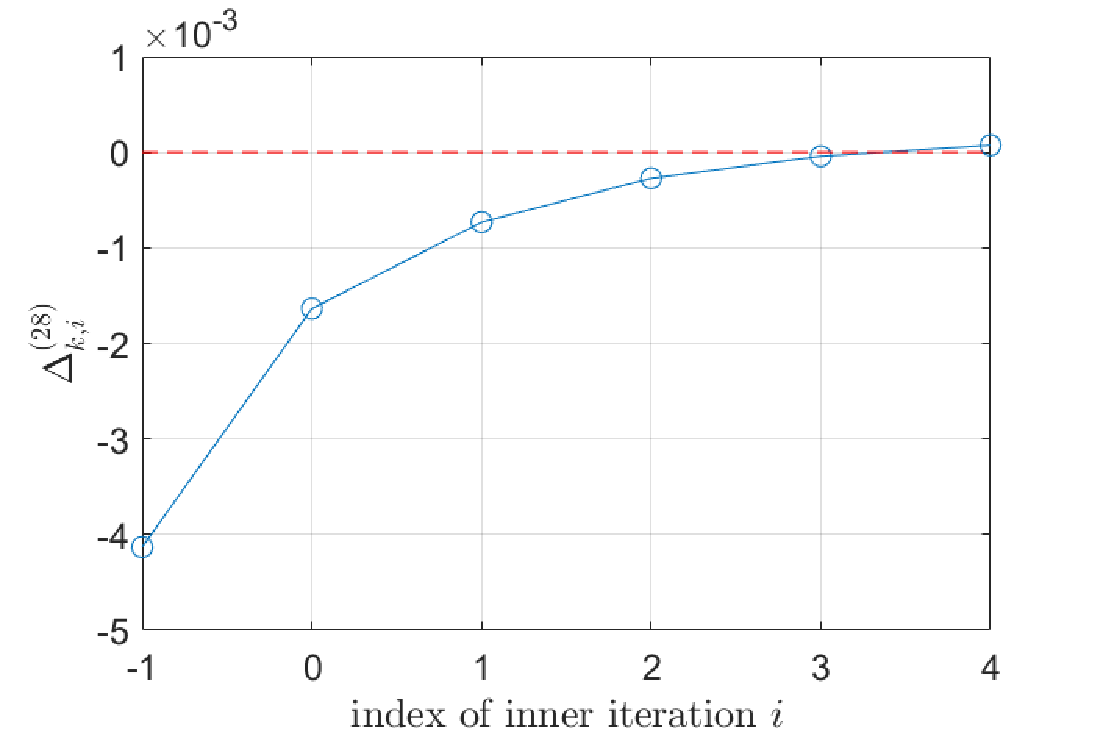}}
\subfloat[][$k=50$.]{\includegraphics[width=5cm]{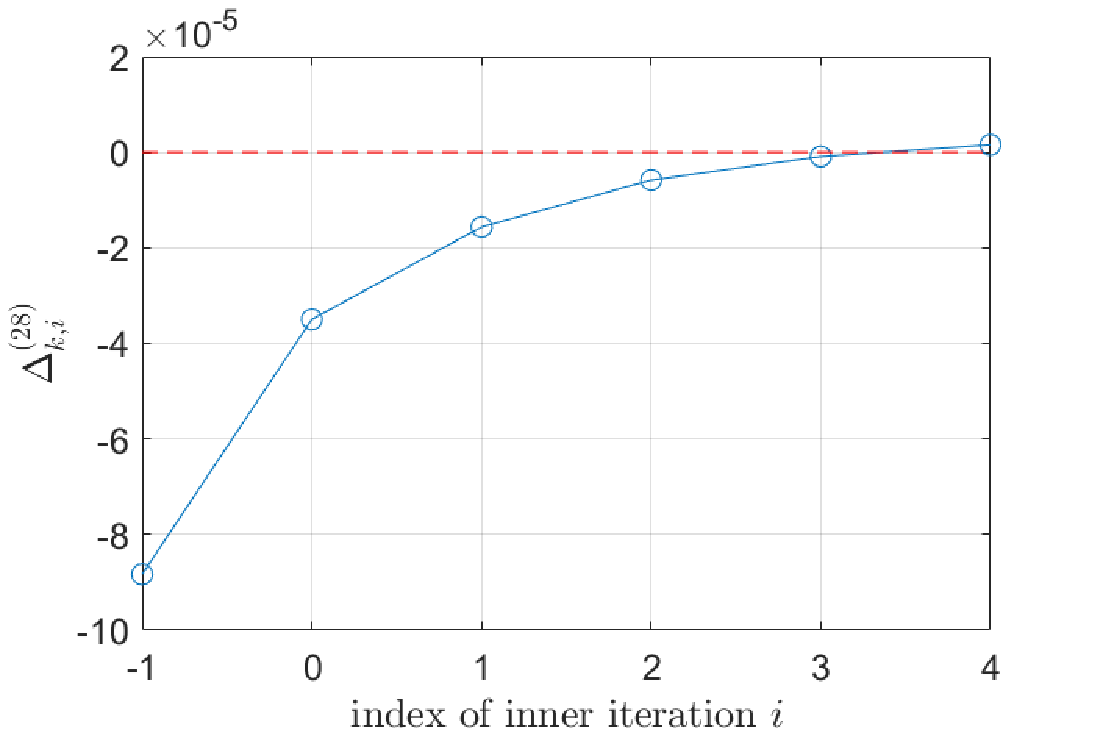}}
\caption{$\Delta_{k,i}^{(\ref{eq:constraint:xik:IiPLDCA})}$ versus the index of inner iteration $i$.}
\label{fig:del_b}
\end{figure}

\begin{algorithm}

\textbf{Input:} $\sigma\in(0,1),\lambda>0,\theta>\frac{1}{\lambda},x_0\in\Real^2,(\zeta_k)_{k\in\Natural}\subset\Real_{++}$ satisfying $\lim_{k\to +\infty} \zeta_k=0$.

\textbf{Output:} a sequence $(x_k)_{k\in\Natural}$ generated by Algorithm \ref{alg:step:two}.

\begin{algorithmic}
\State $k\gets 0$
\State $x_k\gets x_0$
\Repeat
\State $u_k\gets \nabla h(x_k)$
\State $i\gets -1$
\State $z_i\gets x_k$
\While{$\Delta_{k,i}^{(\ref{eq:decrease:g})}<0$ or $\Delta_{k,i}^{(\ref{eq:constraint:xik:IiPLDCA})}<0$}
\State $i\gets i+1$
\State $z_i\gets \textrm{ISTA}(z_{i-1};k)$
\EndWhile
\State $k\gets k+1$
\State $x_k\gets z_i$
\Until{$x_k=x_{k-1}$}
\end{algorithmic}

\caption{The Pseudocode for Solving the Considered DC Program}\label{alg:pseudocode}
\end{algorithm}

\section{Conclusion}\label{sec:conclusion}
In this paper, we have pointed out the termination issue of existing inexact DC algorithms by presenting counterexamples. To resolve this termination issue and to permit greater inexactness in computation, we have proposed a novel inexact proximal linearized DC algorithm termed tPLDCA. We have established convergence guarantees of the sequence generated by tPLDCA. Moreover, we have proved that for every inner iterative algorithm that converges to a solution of the proximal point subproblem, the inner loop of tPLDCA is guaranteed to terminate in finite iterations. In addition, a feasible implementation of tPLDCA has been proposed under the assumption that the first convex component of the DC function is the pointwise maximum of finitely many convex smooth functions. Numerical results have demonstrate the effectiveness of the proposed implementation of tPLDCA.

\section*{Funding}
This work was supported in part by JSPS Grants-in-Aid (21J22393) and by JSPS Grants-in-Aid (19H04134).

\bibliographystyle{tfnlm}
\bibliography{ref}

\end{document}